\newtheorem{theorem}{Theorem}[section]
\newtheorem{proposition}[theorem]{Proposition} 
\newtheorem{corollary}[theorem]{Corollary}
\newtheorem{lemma}[theorem]{Lemma}
\newtheorem{conjecture}[theorem]{Conjecture}
\newtheorem*{theorem*}{Theorem}
\newtheorem*{conjecture*}{Conjecture}
\newtheorem*{corollary*}{Corollary}
\newtheorem*{proposition*}{Proposition}
\newtheorem{problem}[theorem]{Problem}
\theoremstyle{definition}
\theoremstyle{remark}
\definecolor{green}{cmyk}{1.0,0.2,0.7,0.07}
\definecolor{mag}{cmyk}{0.0,0.9,0.3,0.0}
\begin{document}

\title{Interval edge-colorings of Cartesian products of graphs II}

\author{Petros A. Petrosyan, Hrant H. Khachatrian, Hovhannes G. Tananyan}

\author{
{\sl Petros A. Petrosyan}\thanks{{\it E-mail address:} 
petros\_petrosyan@ysu.am}\\ 
Department of Informatics \\ and Applied Mathematics,\\
Yerevan State University \\ 0025, Armenia
\and
{\sl Hrant H. Khachatrian}\thanks{{\it E-mail address:} 
hrant@yerevann.com}\\ 
YerevaNN, \\ 
Department of Informatics \\ and Applied Mathematics,\\
Yerevan State University \\ 0025, Armenia
\and
{\sl Hovhannes G. Tananyan}\thanks{{\it E-mail address:} 
HTananyan@yahoo.com} \\
Department of Applied Mathematics\\
and Informatics,\\
Russian-Armenian University\\
0051, Armenia
}
\maketitle

\begin{abstract}
An \emph{interval $t$-coloring} of a graph $G$ is a proper edge-coloring with colors $1,\dots,t$ such that the colors on the edges incident to every vertex of $G$ are colored by consecutive colors. A graph $G$ is called \emph{interval colorable} if it has an interval $t$-coloring for some positive integer $t$. Let $\mathfrak{N}$ be the set of all interval colorable graphs. For a graph $G\in \mathfrak{N}$, we denote by $w(G)$ and $W(G)$ the minimum and maximum number of colors in an interval coloring of a graph $G$, respectively. In this paper we present some new sharp bounds on $W(G\square H)$ for graphs $G$ and $H$ satisfying various conditions. In particular, we show that if $G,H\in \mathfrak{N}$ and $H$ is an $r$-regular graph, then $W(G\square H)\geq W(G)+W(H)+r$. We also derive a new upper bound on $W(G)$ for interval colorable connected graphs with additional distance conditions. Based on these bounds, we improve known lower and upper bounds on $W(C_{2n_{1}}\square C_{2n_{2}}\square\cdots \square C_{2n_{k}})$ for $k$-dimensional tori $C_{2n_{1}}\square C_{2n_{2}}\square\cdots \square C_{2n_{k}}$ and on $W(K_{2n_{1}}\square K_{2n_{2}}\square\cdots \square K_{2n_{k}})$ for Hamming graphs $K_{2n_{1}}\square K_{2n_{2}}\square\cdots \square K_{2n_{k}}$, and these new bounds coincide with each other for hypercubes. Finally, we give several results on interval colorings of Fibonacci cubes $\Gamma_{n}$.
\end{abstract}

\textbf{Keywords:} Interval edge-coloring, Cartesian product, Hamming graph, $k$-dimensional torus, Fibonacci cube.

\textbf{Mathematics Subject Classification (2020)}: 05C15, 05C76.

\section{Introduction}

We use \cite{West} for terminology and notation not defined here. We consider graphs that are finite, undirected, and have no loops or multiple edges. Let $V(G)$ and $E(G)$ denote the sets of vertices and edges of a graph $G$, respectively. The degree of a vertex $v\in V(G)$ is denoted by $d_{G}(v)$, the maximum degree of $G$ by $\Delta(G)$ and the chromatic index of $G$ by $\chi^{\prime}(G)$. A proper edge-coloring of a graph $G$ is a mapping $\alpha: E(G)\rightarrow \mathbb{N}$ such that $\alpha(e)\not=\alpha(e')$ for every pair of adjacent edges $e$ and $e'$ in $G$. 

The classical graph coloring problem is the problem of assigning positive integers (colors) to the vertices or edges of a graph so that no two adjacent vertices/edges receive the same color. However, many problems of scheduling theory can be reduced not just to classical graph coloring problems, but to ones of existence and construction of proper vertex or edge colorings with additional constraints. For example, list colorings of graphs \cite{Vizing,EdrosRubinTaylor,JensenToft,Marx} are proper vertex colorings for which each vertex receives a color from its list of available colors, and this type of vertex coloring is used to model a scheduling, where each job can be processed in certain time slots or if each job can be processed by certain machines. Another interesting example is interval edge-colorings of graphs \cite{AsrKam,AsrKamJCTB}. Interval edge-colorings are proper edge-colorings such that the colors on the edges incident to every vertex are colored by consecutive colors, and these edge-colorings are used to model timetables with a compactness requirement. Formally, an \emph{interval $t$-coloring} of a graph $G$ is a proper edge-coloring of $G$ with colors $1,\ldots,t$ such that all colors are used and for each $v\in V(G)$, the set of colors of the edges incident to $v$ is an interval of integers. A graph $G$ is \emph{interval colorable} if it has an interval $t$-coloring for some positive integer $t$. The set of all interval colorable graphs is denoted by $\mathfrak{N}$.

The notion of interval colorings was introduced by Asratian and Kamalian \cite{AsrKam} (available in English as \cite{AsrKamJCTB}) in 1987 and was motivated by the problem of finding compact school timetables, that is, timetables such that the lectures of each teacher and each class are scheduled at consecutive periods. This problem corresponds to the problem of finding an interval edge-coloring of a bipartite multigraph. 
In \cite{AsrKam,AsrKamJCTB}, Asratian and Kamalian noted that if $G\in \mathfrak{N}$, then $\chi^{\prime }\left(G\right)=\Delta(G)$. Moreover,
they also proved \cite{AsrKam,AsrKamJCTB} that if a triangle-free graph $G$ has an interval $t$-coloring, then $t\leq \left\vert V(G)\right\vert -1$. In \cite{Kampreprint}, Kamalian investigated interval colorings of complete bipartite graphs and trees. In particular, he proved that the complete bipartite graph
$K_{m,n}$ has an interval $t$-coloring if and only if
$m+n-\gcd(m,n)\leq t\leq m+n-1$, where $\gcd(m,n)$ is the greatest
common divisor of $m$ and $n$. In \cite{Petrosyan,PetrosyanKhachatrianTananyan}, Petrosyan, Khachatrian and Tananyan investigated interval colorings of complete graphs and
$n$-dimensional cubes. In particular, they proved that the $n$-dimensional cube $Q_{n}$ has an interval $t$-coloring if and only if $n\leq t\leq \frac{n\left(n+1\right)}{2}$. Generally, it is an $NP$-complete problem to determine whether a bipartite graph has an interval coloring \cite{Seva}.
In fact, for every positive integer $\Delta\geq 11$, there exists a bipartite graph with maximum degree $\Delta$ that has no interval coloring \cite{PetrosHrant}. However, some classes of graphs have been proved to admit interval colorings; it is known, for example, that trees, regular and complete bipartite graphs \cite{AsrKam,Hansen,Kampreprint}, subcubic graphs with $\chi^{\prime }\left(G\right)=\Delta(G)$ \cite{ArmenCarlJohanPetros}, doubly convex bipartite graphs \cite{AsrDenHag,KamDiss}, grids \cite{GiaroKubale1}, outerplanar bipartite graphs \cite{GiaroKubale2}, $(2,b)$-biregular graphs \cite{Hansen,HansonLotenToft,KamMir} and $(3,6)$-biregular graphs \cite{CarlJToft} have interval colorings, where an \emph{$(a,b)$-biregular} graph is a bipartite graph where the vertices in one part all have degree $a$ and the vertices in the other part all have degree $b$.

Interval edge-colorings of Cartesian products of graphs were first studied by Giaro and Kubale \cite{GiaroKubale1}. In \cite{GiaroKubale1}, Giaro and Kubale showed that all grids, bipartite cylinders and tori have interval colorings. In 2004, Giaro and Kubale \cite{GiaroKubale2} proved that if $G,H\in \mathfrak{N}$, then $G\square H\in \mathfrak{N}$. Moreover, $w(G\square H)\leq w(G) + w(H)$ and $W(G\square H)\geq W(G) + W(H)$. In 2011, Petrosyan \cite{Petrosyanproducts} investigated interval colorings of various products of graphs. In particular, he proved that the torus $C_{n_{1}}\square C_{n_{2}}\in \mathfrak{N}$ if and only if $n_{1}n_{2}$ is even, and the Hamming graph $K_{n_{1}}\square K_{n_{2}}\square\cdots \square K_{n_{k}}\in \mathfrak{N}$ if and only if $n_{1}n_{2}\cdots n_{k}$ is even.
In 2013, Petrosyan, Khachatrian and Tananyan \cite{PetrosyanKhachatrianTananyan} proved that if $G$ is an $r$-regular graph and $G\in \mathfrak{N}$, then $W(G\square P_{m})\geq W(G)+W(P_{m})+(m-1)r$ and $W(G\square C_{2n})\geq W(G)+W(C_{2n})+nr$ ($r\geq 2$). Moreover, they also showed that if $G\square H$ is planar and both factors have at least $3$ vertices, then $G\square H\in\mathfrak{N}$ and $w(G\square H)\leq 6$. Interval edge-colorings of some other products of graphs were considered in \cite{Kubale,CSITproducts,TepanyanPetrosyan}. 

In this paper we continue our study of interval edge-colorings of Cartesian products of graphs. We first consider interval colorability of Cartesian products of graphs depending on the interval colorability of the factors. We also improve the known lower bound on $W(G\square H)$ for interval colorable graphs $G$ and $H$, by showing that if $G,H\in \mathfrak{N}$ and $H$ is an $r$-regular graph, then $W(G\square H)\geq W(G)+W(H)+r$. Then we introduce the notion of separable interval colorings of graphs and use it for further improvements of the previous bounds. Next, we derive a new upper bound on $W(G)$ for interval colorable connected graphs with additional distance conditions. Based on these bounds, we improve known lower and upper bounds on $W(C_{2n_{1}}\square C_{2n_{2}}\square\cdots \square C_{2n_{k}})$ for $k$-dimensional tori $C_{2n_{1}}\square C_{2n_{2}}\square\cdots \square C_{2n_{k}}$ and on $W(K_{2n_{1}}\square K_{2n_{2}}\square\cdots \square K_{2n_{k}})$ for Hamming graphs $K_{2n_{1}}\square K_{2n_{2}}\square\cdots \square K_{2n_{k}}$, and these new bounds coincide with each other for hypercubes. Finally, we give several results on interval colorings of Fibonacci cubes $\Gamma_{n}$.  

\section{Notation, definitions and auxiliary results}

If $G$ is a connected graph, the distance between two vertices $u$ and $v$ in $G$, we denote by $d_{G}(u,v)$ (or $d(u,v)$), the eccentricity of a vertex $v$ in $G$ by $\epsilon(v)$ and the diameter of $G$ by $\mathrm{diam}(G)$. For a vertex $v\in V(G)$, we denote by $N_{i}(v)$, the set of vertices of $G$ at distance $i$ from $v$. The interval $I(u,v)$ between two vertices $u$ and $v$ of a connected graph $G$ is the set of vertices on shortest paths between $u$ and $v$. Clearly, $I(u,v)$ contains $u$ and $v$. We use the standard notations $P_{n}$, $C_{n}$, $K_{n}$ and $Q_{n}$ for the path, cycle, complete graph on
$n$ vertices and the hypercube of dimension $n$, respectively. A partial edge-coloring of a graph $G$ is a coloring of some of the edges of
$G$ such that no two adjacent edges receive the same color. If
$\alpha$ is a partial edge-coloring of $G$ and $v\in V(G)$, then
$S\left(v,\alpha\right)$ denotes the set of colors appearing on
colored edges incident to $v$. If $\alpha $ 
is a partial edge-coloring of a graph $G$ and $v\in V(G)$, then the
smallest and largest colors of $S\left(v,\alpha \right)$
are denoted by $\underline S\left(v,\alpha \right)$ and $\overline S\left(v,\alpha \right)$, respectively. Clearly, if $\alpha$ is a proper
edge-coloring of a graph $G$, then $\vert S(v,\alpha)\vert =d_G(v)$
for every $v\in V(G)$.

For two positive integers $a$ and $b$ with $a\leq b$, the set
$\left\{a,a+1,\ldots ,b\right\}$ is denoted by $\left[a,b\right]$.

Let $G$ and $H$ be graphs. The Cartesian product $G\square H$ is
defined as follows:
\begin{center}
$V(G\square H)=V(G)\times V(H)$,
\end{center}
\begin{center}
$E(G\square H)=\{(u_{1},v_{1})(u_{2},v_{2})\colon\,
(u_{1}=u_{2}~and~v_{1}v_{2}\in E(H))~or~(v_{1}=v_{2}~and~ u_{1}u_{2}\in
E(G))\}$.
\end{center}

Clearly, if $G$ and $H$ are connected graphs, then $G\square H$ is
connected, too. Moreover, $\Delta(G\square H)=\Delta(G)+\Delta(H)$
and $\mathrm{diam}(G\square H)=\mathrm{diam}(G)+\mathrm{diam}(H)$.

The $k$-dimensional torus $T(n_{1},\ldots,n_{k})$ ($n_{i}\in
\mathbb{N},n_{i}\geq 3$) is the Cartesian product of cycles $C_{n_{1}}\square
C_{n_{2}}\square\cdots\square C_{n_{k}}$. The Hamming graph $H(n_{1},\ldots,n_{k})$ ($n_{i}\in \mathbb{N}$) is the Cartesian product of complete graphs $K_{n_{1}}\square K_{n_{2}}\square\cdots\square K_{n_{k}}$. The graph $H_{n}^{k}$ is the Cartesian
product of the complete graph $K_{n}$ by itself $k$ times. Clearly, the graph $H_{2}^{n}$ is isomorphic to the $n$-dimensional cube $Q_{n}$. We use \cite{HammackImrichKlavzar} for concepts and notation on the Cartesian products of graphs not defined here.\\

We also need the following results.

\begin{lemma} \cite{KamDiss}
\label{ourlemma} If $\alpha$ is an edge-coloring of a connected
graph $G$ with colors $1,\ldots,t$ such that the edges incident to
each vertex $v\in V(G)$ are colored by distinct and consecutive
colors, and $\min_{e\in E(G)}\{\alpha(e)\}=1$, $\max_{e\in
E(G)}\{\alpha(e)\}=t$, then $\alpha$ is an interval $t$-coloring of
$G$.
\end{lemma}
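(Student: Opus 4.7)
The plan is to show that under the given hypotheses, every color in $[1,t]$ actually appears on some edge; once this is established, $\alpha$ satisfies all the defining properties of an interval $t$-coloring (properness and consecutive colors at each vertex are already given by hypothesis).

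Suppose for contradiction that some color $c\in[1,t]$ is missing from $\alpha$. Since $\min_{e}\alpha(e)=1$ and $\max_{e}\alpha(e)=t$, both colors $1$ and $t$ do appear, so $2\le c\le t-1$. For each vertex $v\in V(G)$, the set $S(v,\alpha)$ is an interval of integers not containing $c$, so it lies entirely in $[1,c-1]$ or entirely in $[c+1,t]$. This induces a partition $V(G)=V_{1}\cup V_{2}$, where $V_{1}=\{v: S(v,\alpha)\subseteq[1,c-1]\}$ and $V_{2}=\{v: S(v,\alpha)\subseteq[c+1,t]\}$.

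The key observation is that no edge can cross this partition: if $uv\in E(G)$ with $u\in V_{1}$ and $v\in V_{2}$, then $\alpha(uv)$ would have to lie simultaneously in $[1,c-1]$ and $[c+1,t]$, which is impossible. On the other hand, the existence of an edge colored $1$ forces $V_{1}\neq\emptyset$, and the existence of an edge colored $t$ forces $V_{2}\neq\emptyset$. Hence $(V_{1},V_{2})$ is a nontrivial partition with no edges between the parts, contradicting the connectivity of $G$.

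Therefore every color in $[1,t]$ is used by $\alpha$, and together with the hypothesized properness and consecutiveness at each vertex, this shows $\alpha$ is an interval $t$-coloring of $G$. The only subtlety is the careful use of connectivity together with the interval structure of $S(v,\alpha)$ to turn a missing intermediate color into a disconnecting partition; this is the only nontrivial step in the argument.
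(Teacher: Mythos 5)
Your argument is correct and complete: the only property not immediate from the hypotheses is that no color in $[1,t]$ is skipped, and your partition of $V(G)$ into vertices whose spectra lie below and above a missing color $c$, with no edge able to cross the partition, contradicts connectivity exactly as needed. Note that the paper itself states this lemma without proof, citing Kamalian's thesis \cite{KamDiss}; your connectivity argument is the standard proof of that result, so there is nothing to contrast with here.
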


\begin{theorem} \cite{CarlJohanHrantPetros}
\label{Eulerian} If $G$ is an Eulerian graph and $|E(G)|$ is odd, then $G\notin \mathfrak{N}$.
\end{theorem}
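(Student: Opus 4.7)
The plan is to argue by contradiction via a parity count on the colors. Suppose $\alpha$ is an interval $t$-coloring of $G$, so for every vertex $v$ the set $S(v,\alpha)$ is a block of $d_G(v)$ consecutive integers. Since $G$ is Eulerian, every $d_G(v)$ is even, so the interval $S(v,\alpha)$ contains exactly $d_G(v)/2$ odd integers and exactly $d_G(v)/2$ even integers, regardless of where it starts.

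Next I would partition the edges by the parity of their color. Let $E_{\mathrm{odd}}=\{e\in E(G):\alpha(e)\text{ is odd}\}$. By the observation above, for each $v\in V(G)$ the number of edges of $E_{\mathrm{odd}}$ incident to $v$ is exactly $d_G(v)/2$. Counting vertex-edge incidences in two ways gives
\begin{equation*}
2|E_{\mathrm{odd}}|=\sum_{v\in V(G)}\frac{d_G(v)}{2}=\frac{1}{2}\sum_{v\in V(G)}d_G(v)=|E(G)|.
\end{equation*}
Hence $|E(G)|$ must be even, contradicting the hypothesis that $|E(G)|$ is odd. Therefore no interval coloring of $G$ exists, i.e. $G\notin\mathfrak{N}$.

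The argument is essentially a one-line double counting once the right statistic, the number of odd-colored edges at each vertex, is identified. The only place where care is needed is the initial parity observation: it uses crucially that every vertex has even degree, so that an interval of $d_G(v)$ consecutive integers splits evenly into odd and even entries. If any vertex had odd degree this split would differ by one, and the double count would break down; this is exactly why the Eulerian assumption (rather than merely the parity of $|E(G)|$) is needed. I do not expect any serious obstacle here; there is no need to invoke Lemma \ref{ourlemma} or any structural property of $G$ beyond the degree parities.
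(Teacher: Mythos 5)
Your proof is correct. The paper does not prove this statement at all --- it is imported as an auxiliary result from the cited reference of Casselgren, Khachatrian and Petrosyan --- and your parity double-count (each vertex of even degree sees exactly $d_G(v)/2$ odd colors, so $2|E_{\mathrm{odd}}|=\sum_v d_G(v)/2=|E(G)|$ forces $|E(G)|$ even) is essentially the standard argument behind that reference's proof. No gaps; as you note, only the evenness of all degrees is used, not connectivity or the existence of an Eulerian circuit.
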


\begin{theorem} \cite{PetrosyanKhachatrianTananyan}
\label{2-Tori} 
For any integers $m,n \geq 2$, we have $W(T(2m,2n))\geq \max\{3m+n+2,3n+m+2\}$, and for any $m\geq 2$, $n\in\mathbb{N}$, we have 
\begin{center}
$W\left(T(2m,2n+1)\right)\geq \left\{
\begin{tabular}{ll}
$2m+2n+2$, & if $m$ is odd,\\
$2m+2n+3$, & if $m$ is even.\\
\end{tabular}%
\right.$
\end{center}
\end{theorem}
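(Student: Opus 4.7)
My plan is to split along the parity of the second cycle. For the even-even statement about $W(T(2m,2n))$, the idea is to invoke the Petrosyan-Khachatrian-Tananyan product inequality $W(G\square C_{2n})\geq W(G)+W(C_{2n})+nr$ for an $r$-regular interval colorable graph $G$ mentioned earlier in the excerpt. Taking $G=C_{2m}$, which is $2$-regular and in $\mathfrak{N}$, I first need the auxiliary estimate $W(C_{2k})\geq k+1$ for every $k\geq 2$. This I would establish by the explicit zig-zag coloring that assigns the colors $1,2,\ldots,k+1,k,k-1,\ldots,2$ in cyclic order around $C_{2k}$: consecutive edges differ by exactly one, so Lemma \ref{ourlemma} certifies this as an interval $(k+1)$-coloring. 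Substituting into the product bound gives $W(T(2m,2n))\geq (m+1)+(n+1)+2n=m+3n+2$, and swapping the roles of the two cycles yields $3m+n+2$; the maximum of these is exactly the claim.

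For the even-odd statement about $W(T(2m,2n+1))$, the same device is unavailable: the odd cycle $C_{2n+1}$ is Eulerian with an odd number of edges, so Theorem \ref{Eulerian} forces $C_{2n+1}\notin\mathfrak{N}$, and neither the Giaro-Kubale nor the Petrosyan-Khachatrian-Tananyan lower bound applies. I would therefore construct an interval coloring of $T(2m,2n+1)$ explicitly. Labeling the vertices as $(i,j)$ with $i\in\mathbb{Z}_{2m}$ and $j\in\mathbb{Z}_{2n+1}$, my plan is to color the horizontal edges $(i,j)(i+1,j)$ by a $j$-dependent shift of the zig-zag coloring of $C_{2m}$, and to color the vertical edges $(i,j)(i,j+1)$ by a companion pattern tuned so that at every vertex the four incident edges carry four consecutive colors. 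Lemma \ref{ourlemma} then guarantees that, once the minimum color $1$ and the intended maximum color ($2m+2n+2$ when $m$ is odd and $2m+2n+3$ when $m$ is even) are both attained, the result is an interval coloring using the required number of colors.

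The main obstacle is this explicit construction for the odd case: the row shifts and the column pattern must jointly produce a proper edge-coloring in which every vertex sees a consecutive block of four colors, while still reaching the claimed extreme color. The parity of $m$ enters through the constraint that the shift must close up consistently after traversing all $2n+1$ rows; when $m$ is even there is room for one extra color in the palette, whereas when $m$ is odd the wrap-around forces a slightly tighter configuration. Once the schedule is written down, the verification reduces to a position-by-position case check on $(i,j)$ together with an appeal to Lemma \ref{ourlemma}, and no ideas beyond the zig-zag coloring used in the first case are required.
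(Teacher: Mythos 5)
The paper never proves this statement: it is imported verbatim from \cite{PetrosyanKhachatrianTananyan} as an auxiliary result, so there is no internal proof to compare against. Judged on its own terms, your first half is sound. The zig-zag coloring $1,2,\ldots,k+1,k,k-1,\ldots,2$ of $C_{2k}$ assigns consecutive integers to the two edges at every vertex (including across the wrap-around, where the colors are $2$ and $1$), so Lemma \ref{ourlemma} certifies it as an interval $(k+1)$-coloring and $W(C_{2k})\geq k+1$. Substituting this into the product bound $W(G\square C_{2n})\geq W(G)+W(C_{2n})+nr$ with $G=C_{2m}$, $r=2$, and then with the two factors exchanged, gives $m+3n+2$ and $3m+n+2$, hence the stated maximum. (The same numbers also follow from Corollary \ref{manygraphs}(a) of the present paper with $H=C_{2m}$, so the argument does not secretly lean on the theorem being proved.)

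The even--odd half, however, is a plan rather than a proof, and it stops exactly where the work begins. You never write down the colors of the vertical edges $(i,j)(i,j+1)$, never specify the row-dependent shifts of the zig-zag pattern, and never verify properness, the requirement that each vertex see four consecutive colors, or that the extreme colors $1$ and $2m+2n+2$ (resp.\ $2m+2n+3$) are actually attained. The parity dichotomy is the entire content of this half of the statement --- one extra color is available precisely when $m$ is even --- and your only remark on it is that the shifts ``must close up consistently after traversing all $2n+1$ rows,'' which locates the difficulty without resolving it. Since $C_{2n+1}$ is Eulerian with an odd number of edges and hence not in $\mathfrak{N}$, no product bound in the paper applies here, so the explicit construction carries the full burden of the second claim; as written, that burden is not discharged. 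To complete the argument you would need to exhibit the coloring in closed form as functions of $i$ and $j$, split by the parity of $m$, and then carry out the vertex-by-vertex verification you allude to before invoking Lemma \ref{ourlemma}.
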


\begin{theorem} \cite{HrantPetros}
\label{Completegraph} If $n=\Pi_{i=1}^{\infty}p_{i}^{\alpha_{i}}$, where $p_{i}$ is the $i$th prime number and $\alpha_{i}\in \mathbb{Z}_{\geq 0}$, then
$$W(K_{2n}) \geq 4n - 3 - \alpha_1 - 2\alpha_2 - 3\alpha_3 - 4\alpha_4 - 4\alpha_5 - \frac{1}{2}\sum\limits_{i=6}^{\infty}{\alpha_i(p_i+1)}.$$
\end{theorem}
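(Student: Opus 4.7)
The plan is to build an explicit interval edge-coloring of $K_{2n}$ that attains the stated lower bound, proceeding by induction on the number of prime divisors of $n$ counted with multiplicity. The base case $n=1$ is $K_{2}$, which trivially has an interval $1$-coloring, matching the formula. For the inductive step, I would fix a prime $p\mid n$, assume the result for $K_{2n/p}$, and lift an interval coloring of $K_{2n/p}$ to one of $K_{2n}$ losing no more than $(p+1)/2$ colors in general, and losing only $1,2,3,4,4$ colors when $p\in\{2,3,5,7,11\}$ respectively.

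For the lifting step, I would partition $V(K_{2n})$ into $p$ classes of $2n/p$ vertices each. The edge set of $K_{2n}$ then splits as the disjoint union of $p$ copies of $K_{2n/p}$ on the individual classes, together with the complete $p$-partite graph $K_{2n/p,\ldots,2n/p}$ between classes, which is $(p-1)\cdot(2n/p)$-regular and admits a $1$-factorization into perfect matchings. I would apply the inductive interval coloring to each fiber copy of $K_{2n/p}$, but shift the color palette of the $j$-th copy by an appropriate multiple of $W(K_{2n/p})+2n/p$, and then interleave the colors assigned to the connector matchings so that, at every vertex, the colors received from its own fiber together with those received from the $(p-1)\cdot(2n/p)$ cross-class matchings form one contiguous block of integers. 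Lemma \ref{ourlemma} then certifies that the resulting proper edge-coloring is an interval coloring, and direct accounting of the span yields the generic loss of $\tfrac{1}{2}(p+1)$.

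To obtain the better constants $1,2,3,4,4$ for small primes, one must handle $p\in\{2,3,5,7,11\}$ by ad hoc refinements of the generic construction. For $p=2$ the lifting is essentially the doubling step that produces the interval coloring of hypercubes in \cite{PetrosyanKhachatrianTananyan}; for $p\in\{3,5\}$ one uses near-resolvable structures in $K_{2p}$ that allow the two boundary matchings of the connector to overlap with the first and last colors of the fiber colorings; for $p\in\{7,11\}$ a more careful choice of starters in $\mathbb{Z}_{2n/p}$ saves one further color over the generic estimate. Each of these special constructions has to be exhibited explicitly and its interval property verified vertex by vertex.

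The main obstacle is bookkeeping: one must ensure that, across many inductive stages, no additional boundary colors are lost beyond the exact amounts recorded in $\alpha_{1}+2\alpha_{2}+3\alpha_{3}+4\alpha_{4}+4\alpha_{5}+\tfrac{1}{2}\sum_{i\geq 6}\alpha_{i}(p_{i}+1)$. Concretely, the difficulty is to prove that the shifts of the $p$ fiber colorings and the ordering of the $p$-partite connector's matchings can always be chosen simultaneously so that the interval property at every vertex is preserved at every level of the induction; this amounts to showing that compatible starters exist in the relevant cyclic groups, which is where the case analysis for small primes becomes delicate.
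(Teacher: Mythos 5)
First, note that the paper does not actually prove Theorem~\ref{Completegraph}: it is imported verbatim from \cite{HrantPetros}, so there is no in-paper argument to compare yours against. Your high-level strategy --- induct over the prime factorization of $n$, lift an interval coloring of $K_{2n/p}$ to one of $K_{2n}$ by splitting $V(K_{2n})$ into $p$ fibers of size $2n/p$, and charge a per-prime loss of $\tfrac{1}{2}(p+1)$ in general with better constants for small primes --- is the right shape and is essentially the strategy of the cited reference. But as written the proposal has a genuine gap at its only substantive step. The lifting mechanism you describe (1-factorize the complete $p$-partite connector $K_{2n/p,\ldots,2n/p}$ into perfect matchings and ``interleave'' these whole matchings with the shifted fiber palettes) cannot work literally: a color class that is a perfect matching of the connector puts the \emph{same} color at \emph{every} vertex of $K_{2n}$, whereas distinct vertices of a single fiber carry distinct length-$(2n/p-1)$ subintervals of the fiber palette (these subintervals range over a window of width about $2n/p-2$, since $W(K_{2m})\approx 4m-3$) and therefore need \emph{different} sets of connector colors to complete their spectra to intervals. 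So the connector must be colored so that the colors seen at a vertex vary with the vertex, the shifts of the $p$ fibers must be coordinated with that variation, and one must verify that the total span loses only the claimed number of colors; this is precisely the technical heart of \cite{HrantPetros} and is nowhere supplied in your outline. Lemma~\ref{ourlemma} only certifies the result \emph{after} this construction exists.

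There is also an arithmetic slip in your accounting for small primes. The generic loss $\tfrac{1}{2}(p+1)$ already equals the coefficients $2,3,4$ appearing in the theorem for $p=3,5,7$, so no ``near-resolvable'' refinement is needed there; the only primes that beat the generic bound are $p=2$ (loss $1$ instead of $\tfrac{3}{2}$) and $p=11$ (loss $4$ instead of $6$, i.e.\ a saving of \emph{two} colors, not one). Since these savings are exactly what distinguishes the theorem from a weaker generic bound, the ad hoc constructions for $p=2$ and $p=11$ would have to be exhibited explicitly and verified; deferring them means the stated inequality, with these particular coefficients, is not established.
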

\bigskip	

\section{Interval colorability of Cartesian products of graphs}

Interval colorability of Cartesian products of graphs was first studied by Giaro and Kubale \cite{GiaroKubale1} in 1997, where the authors proved that if $G\in \mathfrak{N}$, then the Cartesian products $G\square P_{n}$ and $G\square C_{2n}$ ($n\geq 2$) are interval colorable. As a corollary they obtained that all $k$-dimensional grids $P_{n_{1}}\square P_{n_{2}}\square \cdots \square P_{n_{k}}$, bipartite cylinders $P_{n_{1}}\square C_{n_{2}}$ and bipartite tori $C_{n_{1}}\square C_{n_{2}}$ have interval colorings. In 2004, Giaro and Kubale \cite{GiaroKubale2} also proved the following result.

\begin{theorem}
\label{GiaroKubale}
If $G,H\in \mathfrak{N}$, then $G\square H\in \mathfrak{N}$.  
\end{theorem}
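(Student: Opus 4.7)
The plan is to construct an interval coloring of $G\square H$ explicitly from given interval colorings $\alpha$ of $G$ (with colors $[1,a]$) and $\beta$ of $H$ (with colors $[1,b]$), by shifting each $G$-fiber and each $H$-fiber by an amount that depends on the opposite coordinate. I may assume $G$ and $H$ are connected, treating components separately otherwise. For $u\in V(G)$ set $M(u)=\overline{S}(u,\alpha)$, and for $v\in V(H)$ set $m(v)=\underline{S}(v,\beta)-1$, and define
\begin{align*}
\gamma((u_1,v)(u_2,v)) &= \alpha(u_1u_2)+m(v),\\
\gamma((u,v_1)(u,v_2)) &= \beta(v_1v_2)+M(u).
\end{align*}
The map $\gamma$ is well defined because each shift depends only on the fiber containing the edge.

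The key verification is that at every vertex $(u,v)$ the colors assigned by $\gamma$ form a consecutive set. The $G$-fiber edges incident to $(u,v)$ contribute the block $[\underline{S}(u,\alpha)+m(v),\,M(u)+m(v)]$, while the $H$-fiber edges contribute $[\underline{S}(v,\beta)+M(u),\,\overline{S}(v,\beta)+M(u)]$, which equals $[M(u)+m(v)+1,\,\overline{S}(v,\beta)+M(u)]$ by the choice of $m$. The first block ends one below where the second begins, so their union is the single interval $[\underline{S}(u,\alpha)+m(v),\,\overline{S}(v,\beta)+M(u)]$, which has $d_G(u)+d_H(v)$ elements; hence $\gamma$ is proper at $(u,v)$ and $S((u,v),\gamma)$ is consecutive.

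To conclude, I would check that $\gamma$ attains the value $1$ (pick $u_0,v_0$ with $\underline{S}(u_0,\alpha)=\underline{S}(v_0,\beta)=1$ and apply $\gamma$ to the $G$-edge at $(u_0,v_0)$ colored $1$ by $\alpha$) and the value $a+b$ (pick $u_0,v_0$ with $\overline{S}(u_0,\alpha)=a$ and $\overline{S}(v_0,\beta)=b$, then the $H$-edge at $(u_0,v_0)$ colored $b$ by $\beta$ gets $\gamma$-value $a+b$). Lemma \ref{ourlemma} applied to the connected graph $G\square H$ then yields that $\gamma$ is an interval $(a+b)$-coloring, establishing $G\square H\in\mathfrak{N}$. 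The only genuinely subtle step is choosing the shifts so the two blocks abut exactly; the rest is bookkeeping. As a byproduct, taking $\alpha,\beta$ to attain $W(G),W(H)$ recovers the lower bound $W(G\square H)\geq W(G)+W(H)$ of Giaro--Kubale that motivates the sharper bounds proved later in the paper.
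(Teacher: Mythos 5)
Your construction is correct, and it is essentially the same layer-shifting argument that underlies this result: the paper itself states Theorem \ref{GiaroKubale} as a cited fact, but the proof it gives for the refinement in Theorem \ref{th:general} uses exactly this idea of translating each fiber's coloring so that the $G$-block and $H$-block of colors at a vertex $(u,v)$ abut, then invoking Lemma \ref{ourlemma}. Your version merely swaps which factor supplies the lower block (you put the $G$-colors below the $H$-colors, the paper does the reverse) and omits the extra shift of the top level that buys the additional $+r$ when $H$ is regular; as you note, it also recovers $W(G\square H)\geq W(G)+W(H)$ and $w(G\square H)\leq w(G)+w(H)$.
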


Later, Petrosyan \cite{Petrosyanproducts} observed that interval colorability of one of the factors of Cartesian products of regular graphs is sufficient for interval colorability of the Cartesian products of regular graphs. More precisely, the following result holds.

\begin{theorem}
\label{Petrosyan}
If $G$ and $H$ are two regular graphs for which at least one of
the following conditions holds:
\begin{description}
    \item[a)] $G$ and $H$ contain a perfect matching,
    \item[b)] $G\in \mathfrak{N}$,
    \item[c)] $H\in \mathfrak{N}$, 
\end{description}  
then $G\square H\in \mathfrak{N}$ and $w(G\square H)=\Delta (G\square H)$.
\end{theorem}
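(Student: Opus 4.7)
The plan is to prove the theorem by producing a proper $(r+s)$-edge-coloring of $G\square H$, where $r=\Delta(G)$ and $s=\Delta(H)$. Since $G\square H$ is $(r+s)$-regular, every such coloring automatically uses all $r+s$ colors at each vertex and is therefore an interval $(r+s)$-coloring; combined with the trivial bound $w(G\square H)\geq \Delta(G\square H)=r+s$, this gives both claims of the theorem. So the whole task reduces to constructing such a proper edge-coloring under each of the hypotheses (a), (b), (c).

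For case (b) (case (c) being symmetric), I would first observe that if $G$ is $r$-regular and $G\in\mathfrak{N}$, then cyclically reducing the colors of any interval coloring of $G$ modulo $r$ produces an interval $r$-coloring $\alpha$ in which every vertex sees all of $\{1,\ldots,r\}$; in particular, $\alpha^{-1}(1)$ is a perfect matching of $G$. Applying Vizing's theorem to $H$ yields a proper $(s+1)$-edge-coloring $\gamma:E(H)\to\{1,\ldots,s+1\}$, and I would write $m(v)$ for the color missing at $v$ in $\gamma$. The proposed coloring $\delta$ of $G\square H$ is: $\delta((u,v)(u',v))=\alpha(uu')$ if $\alpha(uu')\neq 1$, $\delta((u,v)(u',v))=r+m(v)$ if $\alpha(uu')=1$, and $\delta((u,v)(u,v'))=r+\gamma(vv')$. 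At $(u,v)$ the horizontal edges carry colors $\{2,\ldots,r,\,r+m(v)\}$ and the vertical edges carry $\{r+1,\ldots,r+s+1\}\setminus\{r+m(v)\}$; these are disjoint with union $\{2,\ldots,r+s+1\}$, so subtracting $1$ from every color turns $\delta$ into the desired interval $(r+s)$-coloring.

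For case (a), I would first color $G-M_G$ (which is $(r-1)$-regular) with at most $r$ colors by Vizing and add $M_G$ as color $r+1$, obtaining a proper $(r+1)$-edge-coloring $\alpha$ of $G$ in which each vertex $u$ misses some $m_G(u)\in\{1,\ldots,r\}$; construct $\beta$ for $H$ analogously with $M_H$ as color $s+1$ and missing colors $m_H(v)\in\{1,\ldots,s\}$. Color an edge $(u,v)(u',v)$ of $G\square H$ with $\alpha(uu')$ if $uu'\notin M_G$ and with $r+m_H(v)$ if $uu'\in M_G$, and color $(u,v)(u,v')$ with $r+\beta(vv')$ if $vv'\notin M_H$ and with $m_G(u)$ if $vv'\in M_H$. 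A direct check at $(u,v)$ shows that the four color sets $\{1,\ldots,r\}\setminus\{m_G(u)\}$, $\{r+m_H(v)\}$, $\{r+1,\ldots,r+s\}\setminus\{r+m_H(v)\}$, $\{m_G(u)\}$ partition $\{1,\ldots,r+s\}$, so every vertex sees all $r+s$ colors and $\delta$ is proper.

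The main obstacle is to ensure the color substitutions are well-defined and proper globally. In case (b), the recoloring of $\alpha^{-1}(1)$-edges to the single color $r+m(v)$ within each copy over $v$ is safe because $\alpha^{-1}(1)$ is a matching, and the new color $r+m(v)$ is chosen precisely so it does not appear among the vertical edge colors at any vertex of that copy; in case (a), the choice $r+m_H(v)$ for $M_G$-edges depends only on $v$ and $m_G(u)$ for $M_H$-edges only on $u$, so they are consistent on both endpoints of every product edge, and properness at each vertex reduces to the disjointness assertions above.
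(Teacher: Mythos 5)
Your proof is correct. Note that the paper does not prove this statement itself: it is Theorem 3.2, quoted from the earlier paper [Petrosyanproducts], so there is no in-paper argument to compare against. Your route is the standard one and it is sound: since $G\square H$ is $(r+s)$-regular, an interval coloring exists with $w(G\square H)=\Delta(G\square H)$ exactly when $\chi'(G\square H)=r+s$, and your three constructions (extracting a $1$-factorization of $G$ from an interval coloring via reduction modulo $r$ in case (b), and the Kotzig-style exchange of the perfect-matching color class against the Vizing-missing colors of the other factor in case (a)) each produce a proper $(r+s)$-edge-coloring whose color sets at every vertex are checked correctly; the well-definedness remarks (the substituted color depends only on the coordinate that is constant along the recolored edge) close the only delicate point.
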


We now show that for non-regular graphs interval colorability of one of the factors of Cartesian products of non-regular graphs is not sufficient for interval colorability of the Cartesian products of these graphs.

\begin{proposition}
\label{interval non-colorable}
If $G$ is an Eulerian graph with an odd number of vertices and an even number of edges, and $H$ is an Eulerian graph with an odd number of edges, then $G\square H\notin \mathfrak{N}$.
\end{proposition}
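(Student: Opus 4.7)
The plan is to reduce the statement directly to Theorem \ref{Eulerian} by showing that under the hypotheses $G\square H$ is itself an Eulerian graph whose edge set has odd cardinality.

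First I would verify that $G\square H$ is Eulerian. Since $G$ and $H$ are Eulerian, both are connected, so $G\square H$ is connected (as noted in Section~2). For any vertex $(u,v)\in V(G\square H)$ we have $d_{G\square H}(u,v)=d_{G}(u)+d_{H}(v)$; both summands are even because $G$ and $H$ are Eulerian, so every vertex of $G\square H$ has even degree. Hence $G\square H$ is Eulerian.

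Next I would count edges. By the definition of the Cartesian product,
\begin{equation*}
|E(G\square H)|=|E(G)|\cdot |V(H)|+|V(G)|\cdot |E(H)|.
\end{equation*}
The hypotheses give that $|E(G)|$ is even, so the first summand is even; while $|V(G)|$ is odd and $|E(H)|$ is odd, so the second summand is odd. Therefore $|E(G\square H)|$ is odd.

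Applying Theorem \ref{Eulerian} to the Eulerian graph $G\square H$ with odd edge count immediately yields $G\square H\notin \mathfrak{N}$, which is the desired conclusion. There is no real obstacle here; the content of the proposition is essentially a parity bookkeeping on the standard edge-count formula for Cartesian products, combined with the previously established non-colorability of Eulerian graphs with an odd number of edges.
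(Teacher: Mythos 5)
Your proposal is correct and follows essentially the same route as the paper: observe that $G\square H$ is Eulerian, use the edge-count formula $|E(G\square H)|=|E(G)||V(H)|+|V(G)||E(H)|$ to conclude the edge count is odd, and apply Theorem \ref{Eulerian}. Your version just spells out the degree and connectivity checks that the paper labels ``clearly.''
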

\begin{proof}
Clearly, if $G$ and $H$ are Eulerian, then $G\square H$ is Eulerian too.
Since $G$ has an odd number of vertices, even number of edges and $H$ has an odd number of edges, we have
$$|E(G\square H)|=|V(G)||E(H)|+|V(H)||E(G)|\text{~is~odd}.$$

Thus, by Theorem \ref{Eulerian}, $G\square H\notin \mathfrak{N}$.
\end{proof}

If we take two copies of an arbitrary Eulerian graph and gluing any vertex of the first copy of the graph with any vertex of the second copy of the graph, then the resulting graph can be considered as the graph $G$ of Proposition \ref{interval non-colorable}. In particular, an example of such a graph is a butterfly graph (the graph obtained by gluing two copies of $K_{3}$ at a vertex), which is interval colorable. As a graph $H$ of Proposition \ref{interval non-colorable} one can take, for example, the triangle $K_{3}$, which is not interval colorable. So, the Cartesian product of the butterfly graph $G$ and the triangle $H$, according to Proposition \ref{interval non-colorable}, is not interval colorable. (Tab. 1)

The Cartesian product of two interval non-colorable graphs can be either interval colorable or interval non-colorable. In particular, the Petersen graph $P_{10}$ with $\chi'(P_{10})=4$ does not satisfy the necessary condition of the interval colorability, which states that if $G\in \mathfrak{N}$, then $\chi^{\prime }\left(G\right)=\Delta(G)$, therefore it is not interval colorable, but the Cartesian product of two such graphs $P_{10}\square P_{10}$ is interval colorable, according to the first point of Theorem \ref{Petrosyan}. On the other hand, if we consider the Cartesian product of two odd cycles $C_{2m+1}\square C_{2n+1}$, then, by Theorem \ref{Eulerian}, $C_{2m+1}\notin \mathfrak{N}$ and $C_{2n+1}\notin \mathfrak{N}$, but in this case $C_{2m+1}\square C_{2n+1}$ is not interval colorable, since $\chi'(C_{2m+1}\square C_{2n+1})=\Delta(C_{2m+1}\square C_{2n+1})+1=5$.

\begin{table}[h]
    \centering
    \begin{tabular}{c|c|c}
         & $G \square H \in \mathfrak{N}$ & $G \square H \notin \mathfrak{N}$ \\
         \hline
        $G \in \mathfrak{N}$, $H \in \mathfrak{N}$ & $G=H=K_2$ & impossible \\
        $G \in \mathfrak{N}$, $H \notin \mathfrak{N}$ & $G=K_2$, $H=K_3$ & $G=$\begin{tikzpicture}[baseline=-0.5ex]
        \draw[fill=black] (0:0) circle(1pt);
        \draw[fill=black] (30:0.5cm) circle(1pt);
        \draw[fill=black] (-30:0.5cm) circle(1pt);
        \draw[fill=black] (150:0.5cm) circle(1pt);
        \draw[fill=black] (-150:0.5cm) circle(1pt);
        \draw (30:0.5cm) -- (0:0) -- (150:0.5cm) -- (-150:0.5cm) -- (0:0) -- (-30:0.5cm) -- cycle;
        \end{tikzpicture}, $H=K_3$ \\ 
        $G \notin \mathfrak{N}$, $H \notin \mathfrak{N}$ & $G=H=P_{10}$ & $G=H=C_3$ \\ 
        
    \end{tabular}
    \caption{Interval colorability of Cartesian products of graphs depending on the factors of the product}
    \label{table-Cartesian}
\end{table}

\section{Cartesian products of interval colorable graphs and separable interval colorings of graphs}

As we mentioned before, Giaro and Kubale \cite{GiaroKubale2} proved that if $G,H\in \mathfrak{N}$, then $G\square H\in \mathfrak{N}$. From the proof of the result, it also follows that $w(G\square H)\leq w(G)+w(H)$ and $W(G\square H)\geq W(G)+W(H)$. Here we show that the lower bound on $W(G\square H)$ can be improved when $H$ is a regular graph.

\begin{theorem}
\label{th:general}
If $G,H\in \mathfrak{N}$ and $H$ is an $r$-regular graph, then 
$$W(G\square H)\geq W(G)+W(H)+r.$$
\end{theorem}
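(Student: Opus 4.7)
The plan is to exhibit an explicit interval $(W(G)+W(H)+r)$-coloring $\gamma$ of $G\square H$, obtained by shifting given interval colorings of $G$ and $H$ in a per-fiber manner. Take an interval $W(G)$-coloring $\alpha$ of $G$ and an interval $W(H)$-coloring $\beta$ of $H$, and for each $v\in V(H)$ set $s(v)=\underline{S}(v,\beta)$, so that by the $r$-regularity of $H$ we have $S(v,\beta)=[s(v),s(v)+r-1]$. I assume without loss of generality that $G$ and $H$ are connected (otherwise one argues component-wise).

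First, I would choose two distinct vertices $u^+,u^-\in V(G)$ satisfying $\overline{S}(u^+,\alpha)=W(G)$ and $\underline{S}(u^-,\alpha)=1$. These exist and can be chosen distinct because when $W(G)\geq 2$ the $\alpha$-edges of colors $W(G)$ and $1$ are distinct edges (so their sets of endpoints share at most one vertex), and when $W(G)=1$ every non-isolated vertex of $G$ qualifies for both roles. Fix then a partition $V(G)=U_A\sqcup U_B$ with $u^+\in U_A$ and $u^-\in U_B$.

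Next, I would define $\gamma$ on $G\square H$ by
\[
\gamma((x,v)(y,v))=\alpha(xy)+s(v)+r-1
\]
for every $G$-edge, and
\[
\gamma((u,v_1)(u,v_2))=\begin{cases}\beta(v_1v_2)+\overline{S}(u,\alpha)+r,& u\in U_A,\\ \beta(v_1v_2)+\underline{S}(u,\alpha)-1,& u\in U_B,\end{cases}
\]
for every $H$-edge. A direct computation shows that at each vertex $(u,v)$ the incident $G$-edges carry colors forming the interval $[\underline{S}(u,\alpha)+s(v)+r-1,\overline{S}(u,\alpha)+s(v)+r-1]$, while the incident $H$-edges carry $[s(v)+\overline{S}(u,\alpha)+r,s(v)+\overline{S}(u,\alpha)+2r-1]$ if $u\in U_A$ and $[s(v)+\underline{S}(u,\alpha)-1,s(v)+\underline{S}(u,\alpha)+r-2]$ if $u\in U_B$. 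In either case these two intervals abut and their union is a single interval of length $d_G(u)+r=d_{G\square H}((u,v))$ consisting of positive integers, so $\gamma$ is proper and the consecutive-colors property holds at every vertex.

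Finally, $\gamma$ attains the color $1$ at an $H$-edge incident to $u^-$ of $\beta$-color $1$ whose $H$-endpoint $v$ has $s(v)=1$, and the color $W(G)+W(H)+r$ at an $H$-edge incident to $u^+$ of $\beta$-color $W(H)$. Applying Lemma \ref{ourlemma} to the connected graph $G\square H$ then yields that $\gamma$ is an interval $(W(G)+W(H)+r)$-coloring, so $W(G\square H)\geq W(G)+W(H)+r$. The main obstacle is engineering shifts that gain the extra $r$ colors beyond the Giaro--Kubale bound $W(G)+W(H)$; the crucial insight is that each $H$-fiber $H_u$ can be glued independently either \emph{above} the associated $G$-colors (vertices in $U_A$) or \emph{below} them (vertices in $U_B$), and the $r$-regularity of $H$ ensures both gluings produce a length-$r$ extension of exactly the size needed to complete the interval of length $d_G(u)+r$ at each $(u,v)$.
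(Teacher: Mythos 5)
Your construction is correct and is essentially the paper's own proof: identical shifts $\alpha(xy)+\overline{S}(v,\beta)=\alpha(xy)+s(v)+r-1$ on the $G$-edges, and each $H$-fiber glued flush either below ($+\underline{S}(u,\alpha)-1$) or above ($+\overline{S}(u,\alpha)+r$) the $G$-interval, with Lemma~\ref{ourlemma} finishing the argument. The only difference is that the paper fixes the bipartition as $U_A=\{u_m\}$ for a single vertex $u_m$ with $\overline{S}(u_m,\alpha)=W(G)$ and places every other fiber below, whereas you allow an arbitrary bipartition separating $u^+$ from $u^-$.
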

\begin{proof}
Let $V(G)=\{u_{1},u_{2},\ldots,u_{m}\}$ and $V(H)=\{v_{1},v_{2},\ldots,v_{n}\}$. Also, let $\alpha$ be an interval $W(G)$-coloring of $G$ and $\beta$ be an interval $W(H)$-coloring of $H$, respectively. Without loss of generality we may assume that $\underline S\left(u_{1},\alpha \right)=1, \underline S\left(v_{1},\beta \right)=1$ and 
$\overline S\left(u_{m},\alpha \right)=W(G), \overline S\left(v_{n},\beta \right)=W(H)$. For the proof, we need to construct an interval $(W(G)+W(H)+r)$-coloring of $G\square H$. 

Let us define an edge-coloring $\gamma$ of $G\square H$ as follows:
    
\begin{description}
    \item[(1)] for each $i$ ($1 \leq i \leq m-1$) and $xy\in E(H)$, let 
$$\gamma\left((u_{i},x)(u_{i},y)\right)=\beta(xy)+\underline S\left(u_{i},\alpha \right)-1;$$
    \item[(2)] for each $j$ ($1 \leq j \leq n$) and $xy\in E(G)$, let 
$$\gamma\left((x,v_{j})(y,v_{j})\right)=\alpha(xy)+\overline S\left(v_{j},\beta\right);$$   
    \item[(3)] for each $xy\in E(H)$, let 
$$\gamma\left((u_{m},x)(u_{m},y)\right)=\beta(xy)+\overline S\left(u_{m},\alpha\right)+r.$$   
\end{description}   

Let us show that $\gamma$ is an interval $(W(G)+W(H)+r)$-coloring of the graph $G\square H$.

By the definition of $\gamma$ and taking into account that $\overline S\left(v_{j},\beta \right)-\underline S\left(v_{j},\beta \right)=r-1$ for $1\leq j\leq n$, we have

\begin{itemize}

\item for $1\leq i\leq m-1$ and $1\leq j\leq n$,

\begin{eqnarray*}
S\left((u_{i},v_{j}),\gamma\right) &=&\left[\underline S\left(v_{j},\beta \right)+\underline S\left(u_{i},\alpha\right)-1,\overline S\left(v_{j},\beta \right)+\underline S\left(u_{i},\alpha\right)-1\right]\cup\\ 
&\cup &\left[\underline S\left(u_{i},\alpha\right)+\overline S\left(v_{j},\beta\right),\overline S\left(u_{i},\alpha\right)+\overline S\left(v_{j},\beta\right)\right]=\\
&=& \left[\underline S\left(v_{j},\beta \right)+\underline S\left(u_{i},\alpha\right)-1,\overline S\left(u_{i},\alpha\right)+\overline S\left(v_{j},\beta\right)\right],
\end{eqnarray*}

\item for $1\leq j\leq n$,

\begin{eqnarray*}
S\left((u_{m},v_{j}),\gamma\right) &=&\left[\underline S\left(u_{m},\alpha \right)+\overline S\left(v_{j},\beta\right),\overline S\left(u_{m},\alpha \right)+\overline S\left(v_{j},\beta\right)\right]\cup\\ 
&\cup &\left[\underline S\left(v_{j},\beta\right)+\overline S\left(u_{m},\alpha\right)+r,\overline S\left(v_{j},\beta\right)+\overline S\left(u_{m},\alpha\right)+r\right]=\\
&=& \left[\underline S\left(u_{m},\alpha\right)+\overline S\left(v_{j},\beta\right),\overline S\left(v_{j},\beta\right)+\overline S\left(u_{m},\alpha\right)+r\right].
\end{eqnarray*}

\end{itemize}

This shows that $\gamma$ is a proper edge-coloring of $G\square H$ such that for each $(u,v)\in V(G\square H)$, $S((u,v),\gamma)$ is an interval of integers. Since $\underline S\left((u_{1},v_{1}),\gamma\right)=\underline S\left(v_{1},\beta \right)+\underline S\left(u_{1},\alpha \right)-1=1$ and 
$\overline S\left((u_{m},v_{n}),\gamma\right)=\overline S\left(v_{n},\beta \right)+\overline S\left(u_{m},\alpha \right)+r=W(G)+W(H)+r$, by Lemma \ref{ourlemma}, $\gamma$ is an interval $(W(G)+W(H)+r)$-coloring of $G\square H$. 
\end{proof}

\begin{corollary}\label{twographs}
If $G$ is an $r$-regular graph, $H$ is an $r'$-regular graph and $G,H\in \mathfrak{N}$, then 
$$W(G\square H)\geq W(G)+W(H)+\max\{r,r'\}.$$
\end{corollary}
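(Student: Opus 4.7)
The plan is to derive the corollary directly from Theorem~\ref{th:general} by exploiting the commutativity of the Cartesian product, namely the natural isomorphism $G\square H\cong H\square G$. Theorem~\ref{th:general} requires the \emph{second} factor to be regular and interval colorable, but since both $G$ and $H$ satisfy these hypotheses, the theorem applies with either graph playing the role of the regular factor.

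First, I would apply Theorem~\ref{th:general} to the product $G\square H$ with $H$ as the $r'$-regular interval colorable factor; this yields
$$W(G\square H)\geq W(G)+W(H)+r'.$$
Next, using $G\square H\cong H\square G$ (so that $W(G\square H)=W(H\square G)$) and applying Theorem~\ref{th:general} to $H\square G$, where now $G$ is the $r$-regular interval colorable factor, I obtain
$$W(G\square H)=W(H\square G)\geq W(H)+W(G)+r.$$

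Finally, combining the two lower bounds gives
$$W(G\square H)\geq W(G)+W(H)+\max\{r,r'\},$$
which is the desired inequality. There is no genuine obstacle here: the proof is a one-line consequence of Theorem~\ref{th:general} together with the symmetry of the Cartesian product, and the only point worth flagging explicitly is the invariance $W(G\square H)=W(H\square G)$, which follows because an interval $t$-coloring of $G\square H$ transfers to an interval $t$-coloring of $H\square G$ under the canonical isomorphism swapping coordinates.
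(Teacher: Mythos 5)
Your proposal is correct and matches the paper's (implicit) argument: the corollary is stated as an immediate consequence of Theorem~\ref{th:general}, obtained by applying it with each factor in turn playing the role of the regular graph and using $W(G\square H)=W(H\square G)$.
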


Since the Cartesian product of two regular graphs is also regular, iteratively applying Corollary \ref{twographs}, we obtain the following result.

\begin{corollary}\label{manygraph}
If $G_{i}$ is an $r_{i}$-regular graph ($1\leq i\leq k$, $r_{1}\geq r_{2}\geq \cdots \geq r_{k}$) and $G_{1},G_{2},\ldots, G_{k}\in \mathfrak{N}$, then 
$$W(G_{1}\square G_{2}\square \cdots \square G_{k})\geq \sum_{i=1}^{k}W(G_{i})+\sum_{i=1}^{k-1}\sum_{j=1}^{i}r_{j}.$$
\end{corollary}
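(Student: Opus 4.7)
The plan is to prove the corollary by induction on $k$, using Corollary \ref{twographs} as the inductive step, exactly as the paper hints. The base case $k=1$ is trivial since the second sum is empty and the first reduces to $W(G_1)$. For $k=2$, the statement is precisely Corollary \ref{twographs} (after noting $\max\{r_1,r_2\}=r_1$ under the ordering hypothesis).

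For the inductive step, I would assume the bound for $k-1$ factors and set $H = G_1 \square G_2 \square \cdots \square G_{k-1}$. Two facts about $H$ are needed: first, $H \in \mathfrak{N}$, which follows by iterated application of Theorem \ref{GiaroKubale}; second, $H$ is regular of degree $r_H := \sum_{i=1}^{k-1} r_i$, because $\Delta(G \square G') = \Delta(G)+\Delta(G')$ and the Cartesian product of regular graphs is regular. Since $r_H \geq r_1 \geq r_k$, we have $\max\{r_H, r_k\} = r_H$, and Corollary \ref{twographs} applied to $H$ and $G_k$ yields
\[
W(G_1 \square \cdots \square G_k) \;\geq\; W(H) + W(G_k) + \sum_{j=1}^{k-1} r_j.
\]

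Plugging in the inductive hypothesis $W(H) \geq \sum_{i=1}^{k-1} W(G_i) + \sum_{i=1}^{k-2}\sum_{j=1}^{i} r_j$ and combining, the new term $\sum_{j=1}^{k-1} r_j$ is exactly the $i=k-1$ summand of the outer sum in the target expression, so the bookkeeping gives
\[
W(G_1 \square \cdots \square G_k) \;\geq\; \sum_{i=1}^{k} W(G_i) + \sum_{i=1}^{k-1}\sum_{j=1}^{i} r_j,
\]
as required.

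There is no real obstacle here beyond index-chasing: the only point that could trip one up is to make sure the monotonicity hypothesis $r_1 \geq r_2 \geq \cdots \geq r_k$ guarantees $r_H \geq r_k$ (so that $r_H$ is the value realized by the $\max$ at every step), and to keep track of the double sum so that each inductive application contributes precisely one new inner sum. Both are straightforward once the induction is set up as above.
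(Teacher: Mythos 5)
Your proof is correct and matches the paper's intended argument: the paper itself justifies this corollary by noting that the Cartesian product of regular graphs is regular and that one obtains the bound by iteratively applying Corollary \ref{twographs}, which is precisely the induction you carry out. Your bookkeeping of the double sum and the observation that $\max\{\sum_{j=1}^{k-1}r_j,\,r_k\}=\sum_{j=1}^{k-1}r_j$ are both accurate, so nothing is missing.
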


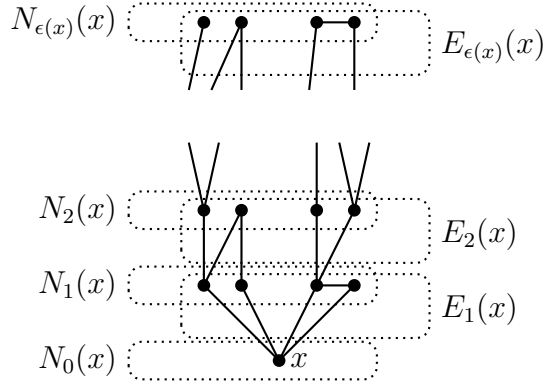
\begin{figure}[t]
\centering
\begin{tikzpicture}[style=thick]
    \coordinate (V0) at (2cm,1cm);
    \coordinate (V11) at (1cm,2cm);
    \coordinate (V12) at (1.5cm,2cm);
    \coordinate (V13) at (2.5cm,2cm);
    \coordinate (V14) at (3cm,2cm);
    \coordinate (V21) at (1cm,3cm);
    \coordinate (V22) at (1.5cm,3cm);
    \coordinate (V23) at (2.5cm,3cm);
    \coordinate (V24) at (3cm,3cm);
    \coordinate (V31) at (0.8cm,3.9cm);
    \coordinate (V32) at (1.2cm,3.9cm);
    \coordinate (V33) at (2.5cm,3.9cm);
    \coordinate (V34) at (2.8cm,3.9cm);
    \coordinate (V35) at (3.2cm,3.9cm);
    \coordinate (V41) at (0.8cm,4.6cm);
    \coordinate (V42) at (1.1cm,4.6cm);
    \coordinate (V43) at (1.5cm,4.6cm);
    \coordinate (V44) at (2.4cm,4.6cm);
    \coordinate (V45) at (3cm,4.6cm);
    \coordinate (Vn1) at (1cm,5.5cm);
    \coordinate (Vn2) at (1.5cm,5.5cm);
    \coordinate (Vn3) at (2.5cm,5.5cm);
    \coordinate (Vn4) at (3cm,5.5cm);
    
    \draw (V0) -- (V11);
    \draw (V0) -- (V12);
    \draw (V0) -- (V13);
    \draw (V0) -- (V14);
    \draw (V13) -- (V14);
    
    \draw (V11) -- (V21);
    \draw (V11) -- (V22);
    \draw (V12) -- (V22);
    \draw (V13) -- (V23);
    \draw (V13) -- (V24);
    
    \draw (V21) -- (V31);
    \draw (V21) -- (V32);
    \draw (V23) -- (V33);
    \draw (V24) -- (V34);
    \draw (V24) -- (V35);
    
    \draw (V41) -- (Vn1);
    \draw (V42) -- (Vn2);
    \draw (V43) -- (Vn2);
    \draw (V44) -- (Vn3);
    \draw (V45) -- (Vn4);
    \draw (Vn3) -- (Vn4);
    
    \draw[dotted,rounded corners=5pt]
  (0,1) node[left]{$N_0(x)$} ++(0,-0.25) rectangle ++(3.3,0.5) ;
  
  \draw[dotted,rounded corners=5pt]
  (0,2) node[left]{$N_1(x)$} ++(0,-0.25) rectangle ++(3.3,0.5) ;
  \draw[dotted,rounded corners=5pt]
  (0.7,1.3) rectangle ++(3.3,0.85) ++(0,-0.47) node[right]{$E_1(x)$};
  
  \draw[dotted,rounded corners=5pt]
  (0,3) node[left]{$N_2(x)$} ++(0,-0.25) rectangle ++(3.3,0.5) ;
  \draw[dotted,rounded corners=5pt]
  (0.7,2.3) rectangle ++(3.3,0.85) ++(0,-0.47) node[right]{$E_2(x)$};
  
  \draw[dotted,rounded corners=5pt]
  (0.7,4.8) rectangle ++(3.3,0.85) ++(0,-0.47) node[right]  {$E_{\epsilon(x)}(x)$} ;
  \draw[dotted,rounded corners=5pt]
  (0,5.5) node[left]{$N_{\epsilon(x)}(x)$} ++(0,-0.25) rectangle ++(3.3,0.5) ;
    
    \draw[fill=black] (V0) circle (2pt) node[right]{$x$};
    \draw[fill=black] (V11) circle (2pt);
    \draw[fill=black] (V12) circle (2pt);
    \draw[fill=black] (V13) circle (2pt);
    \draw[fill=black] (V14) circle (2pt);
    \draw[fill=black] (V21) circle (2pt);
    \draw[fill=black] (V22) circle (2pt);
    \draw[fill=black] (V23) circle (2pt);
    \draw[fill=black] (V24) circle (2pt);
    \draw[fill=black] (Vn1) circle (2pt);
    \draw[fill=black] (Vn2) circle (2pt);
    \draw[fill=black] (Vn3) circle (2pt);
    \draw[fill=black] (Vn4) circle (2pt);
\end{tikzpicture}
\caption{Level representation of a connected graph $G$ with respect to $x\in V(G)$.}
\label{graphLevels}
\end{figure}

Let us now consider a connected graph $G$. Choose a vertex $x\in V(G)$ and consider the level representation of $G$ with respect to $x$: 

$$V(G)=\bigcup_{i=0}^{\epsilon(x)}N_{i}(x),\text{~where}$$
$$N_{i}(x)=\{v\in V(G): d(v,x)=i\}~(0\leq i\leq \epsilon(x));$$
$$E(G)=\bigcup_{i=1}^{\epsilon(x)}E_{i}(G),\text{~where}$$
$$E_{i}(G)=\{uv\in E(G): u\in N_{i-1}(x)\cup N_{i}(x),v\in N_{i}(x)\}~(1\leq i\leq \epsilon(x)).\text{~(See Fig. 1)}$$

If $\alpha$ is a proper edge-coloring of a graph $G$ with the level representation with respect to $x$ defined before, then for each $v\in N_{i}(x)$ ($1\leq i\leq \epsilon(x)$), define the sets $S_{x}^{-}(v,\alpha)$ and $S_{x}^{+}(v,\alpha)$ as follows:
$$S(v,\alpha)=S_{x}^{-}(v,\alpha)\cup S_{x}^{+}(v,\alpha), \text{~where}$$
$$S_{x}^{-}(v,\alpha)=\{\alpha(uv):uv\in E(G), u\in N_{i-1}(x)\cup N_{i}(x)\},$$
$$S_{x}^{+}(v,\alpha)=\{\alpha(uv):uv\in E(G), u\in N_{i+1}(x)\}.$$

\begin{figure}[b!]
\centering
\begin{tikzpicture}[style=thick]
    \node at (-0.5cm, 1cm) {$N_{i-1}(x)$};
    \node at (-0.5cm, 2.5cm) {$N_i(x)$};
    \node at (-0.5cm, 4cm) {$N_{i+1}(x)$};
    
    \coordinate (V111) at (1cm,1cm);
    \coordinate (V112) at (1.5cm,1cm);
    \coordinate (V113) at (2cm,1cm);
    \coordinate (V121) at (1cm,2.5cm);
    \coordinate (V122) at (1.5cm,2.5cm);
    \coordinate (V123) at (2cm,2.5cm);
    \coordinate (V131) at (1cm,4cm);
    \coordinate (V132) at (1.5cm,4cm);
    \coordinate (V133) at (2cm,4cm);
    
    \draw (V111) -- (V122);
    \draw (V112) -- (V122);
    \draw (V113) -- (V122);
    \draw (V121) -- (V122);
    \draw (V123) -- (V122);
    \draw (V131) -- (V122);
    \draw (V132) -- (V122);
    \draw (V133) -- (V122);
    
    \draw[fill=black] (V111) circle (2pt);
    \draw[fill=black] (V112) circle (2pt);
    \draw[fill=black] (V113) circle (2pt);
    \draw[fill=black] (V121) circle (2pt);
    \draw[fill=black] (V122) circle (2pt) node[right] at (1.55cm,2.7cm) {$v$};
    \draw[fill=black] (V123) circle (2pt);
    \draw[fill=black] (V131) circle (2pt);
    \draw[fill=black] (V132) circle (2pt);
    \draw[fill=black] (V133) circle (2pt);
    
    \node at (1.5cm, 0.3cm) {$S(v,\alpha)$};
    
    \coordinate (V211) at (4cm,1cm);
    \coordinate (V212) at (4.5cm,1cm);
    \coordinate (V213) at (5cm,1cm);
    \coordinate (V221) at (4cm,2.5cm);
    \coordinate (V222) at (4.5cm,2.5cm);
    \coordinate (V223) at (5cm,2.5cm);
    \coordinate (V231) at (4cm,4cm);
    \coordinate (V232) at (4.5cm,4cm);
    \coordinate (V233) at (5cm,4cm);
    
    \draw (V211) -- (V222);
    \draw (V212) -- (V222);
    \draw (V213) -- (V222);
    \draw (V221) -- (V222);
    \draw (V223) -- (V222);
    \draw[dotted] (V231) -- (V222);
    \draw[dotted] (V232) -- (V222);
    \draw[dotted] (V233) -- (V222);
    
    \draw[fill=black] (V211) circle (2pt);
    \draw[fill=black] (V212) circle (2pt);
    \draw[fill=black] (V213) circle (2pt);
    \draw[fill=black] (V221) circle (2pt);
    \draw[fill=black] (V222) circle (2pt) node[right] at (4.55cm,2.7cm) {$v$};
    \draw[fill=black] (V223) circle (2pt);
    \draw[fill=black] (V231) circle (2pt);
    \draw[fill=black] (V232) circle (2pt);
    \draw[fill=black] (V233) circle (2pt);
    
    \node at (4.5cm, 0.3cm) {$S_x^{-}(v,\alpha)$};
    
    \coordinate (V311) at (7cm,1cm);
    \coordinate (V312) at (7.5cm,1cm);
    \coordinate (V313) at (8cm,1cm);
    \coordinate (V321) at (7cm,2.5cm);
    \coordinate (V322) at (7.5cm,2.5cm);
    \coordinate (V323) at (8cm,2.5cm);
    \coordinate (V331) at (7cm,4cm);
    \coordinate (V332) at (7.5cm,4cm);
    \coordinate (V333) at (8cm,4cm);
    
    \draw[dotted] (V311) -- (V322);
    \draw[dotted] (V312) -- (V322);
    \draw[dotted] (V313) -- (V322);
    \draw[dotted] (V321) -- (V322);
    \draw[dotted] (V323) -- (V322);
    \draw (V331) -- (V322);
    \draw (V332) -- (V322);
    \draw (V333) -- (V322);
    
    \draw[fill=black] (V311) circle (2pt);
    \draw[fill=black] (V312) circle (2pt);
    \draw[fill=black] (V313) circle (2pt);
    \draw[fill=black] (V321) circle (2pt);
    \draw[fill=black] (V322) circle (2pt) node[right] at (7.55cm,2.7cm) {$v$};
    \draw[fill=black] (V323) circle (2pt);
    \draw[fill=black] (V331) circle (2pt);
    \draw[fill=black] (V332) circle (2pt);
    \draw[fill=black] (V333) circle (2pt);
    
    \node at (7.5cm, 0.3cm) {$S_x^{+}(v,\alpha)$};
    
\end{tikzpicture}
\caption{Lower and upper spectrums of the vertex $v$}
\label{vertexSpectrums}
\end{figure}
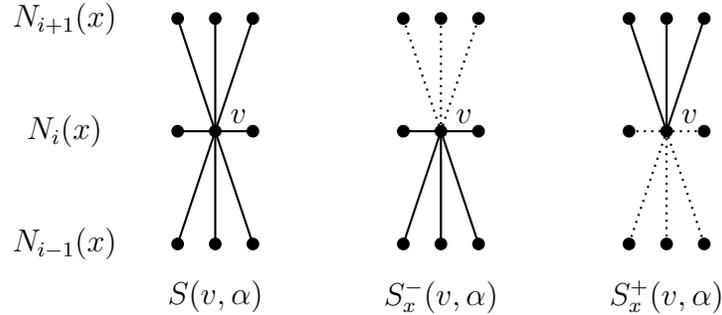

The sets $S_{x}^{-}(v,\alpha)$ and $S_{x}^{+}(v,\alpha)$
are called lower and upper spectrums of the vertex $v$, respectively (See Fig. 2). If $G$ is a connected graph and $\alpha$ its proper edge-coloring, then the vertex $v$ of $G$ is called a separable vertex with respect to $x$ if $\max S_{x}^{-}(v,\alpha)<\min S_{x}^{+}(v,\alpha)$. If $S_{x}^{-}(v,\alpha)=\emptyset$, then we set $\max S_{x}^{-}(v,\alpha)=-\infty$, and if $S_{x}^{+}(v,\alpha)=\emptyset$, then we set $\min S_{x}^{+}(v,\alpha)=+\infty$. Since $S_{x}^{-}(x,\alpha)=\emptyset$, the vertex $x$ is a separable vertex. An interval edge-coloring $\alpha$ of a connected graph $G$ is called a separable interval coloring with respect to $x\in V(G)$ if for each vertex $v\in V(G)$, $v$ is a separable vertex with respect to $x$.

\begin{theorem}
\label{th:separableintervalcoloring}
If a connected graph $G$ has a separable interval $t_{G}$-coloring $\alpha$ with respect to $x\in V(G)$ such that $1\in S(x,\alpha)$ and there exists a vertex $y\in N_{\epsilon(x)}(x)$ with $t_{G}\in S(y,\alpha)$, then for any $r$-regular graph $H$ with an interval $t_{H}$-coloring, we have 
$$W(G\square H)\geq t_{G}+t_{H}+\epsilon(x)r.$$
\end{theorem}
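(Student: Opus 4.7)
The plan is to generalise the coloring construction from the proof of Theorem \ref{th:general}. That argument effectively used only a coarse two-level split of $G$ (the vertex $u_m$ against the rest of $V(G)$) and introduced a single extra shift of $r$; here the level decomposition $V(G)=\bigcup_{i=0}^{\epsilon(x)}N_i(x)$ gives $\epsilon(x)+1$ layers, and we arrange the shifts so that each passage from $N_{i-1}(x)$ to $N_i(x)$ contributes an additional $r$, accumulating $\epsilon(x)r$ in total. Separability is exactly the property that lets us do this cleanly: at each $v\in V(G)$ the interval $S(v,\alpha)$ splits as $S_x^{-}(v,\alpha)=[\underline S(v,\alpha),q_v]$ and $S_x^{+}(v,\alpha)=[q_v+1,\overline S(v,\alpha)]$, where $q_v=\max S_x^{-}(v,\alpha)$, leaving a natural ``slot'' of width $r$ at position $q_v$ into which we will insert the colors coming from the $H$-copy above $v$.

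Fix an interval $t_H$-coloring $\beta$ of $H$ with $V(H)=\{v_1,\ldots,v_n\}$, $\underline S(v_1,\beta)=1$, and $\overline S(v_n,\beta)=t_H$, and adopt the conventions $q_x=0$ and $q_v=\overline S(v,\alpha)$ whenever $S_x^{+}(v,\alpha)=\emptyset$ (so that the formulas below remain well-defined at the boundary levels). We define an edge-coloring $\gamma$ of $G\square H$ by
\begin{itemize}
\item $\gamma\bigl((v,v_j)(v,v_k)\bigr)=\beta(v_jv_k)+q_v+ir$ for every $v\in N_i(x)$ and $v_jv_k\in E(H)$;
\item $\gamma\bigl((v,v_j)(w,v_j)\bigr)=\alpha(vw)+\overline S(v_j,\beta)+(i-1)r$ for every $vw\in E_i(G)$ and $v_j\in V(H)$.
\end{itemize}
Thus each $G$-edge is shifted by an amount depending on the level $i$ at which it ``terminates,'' and the entire $H$-copy sitting above a vertex $v\in N_i(x)$ is shifted by $q_v+ir$.

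It then suffices to check two things: that every vertex spectrum of $\gamma$ is a set of consecutive integers, and that $1$ and $t_G+t_H+\epsilon(x)r$ are the smallest and largest colors used. For the first point, at $(v,v_j)$ with $v\in N_i(x)$ the spectrum decomposes as the union of three blocks, the downward/horizontal $G$-block $S_x^{-}(v,\alpha)+\overline S(v_j,\beta)+(i-1)r$, the $H$-block $[\underline S(v_j,\beta),\overline S(v_j,\beta)]+q_v+ir$, and the upward $G$-block $S_x^{+}(v,\alpha)+\overline S(v_j,\beta)+ir$; a short calculation using the regularity identity $\overline S(v_j,\beta)-\underline S(v_j,\beta)=r-1$ shows that these are pairwise adjacent and fill the single interval $[\underline S(v,\alpha)+\overline S(v_j,\beta)+(i-1)r,\,\overline S(v,\alpha)+\overline S(v_j,\beta)+ir]$. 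For the second point, evaluating the $H$-edge formula at $(x,v_1)$ with the $\beta$-color equal to $1$ gives color $1$, and at $(y,v_n)$ with $\beta$-color $t_H$ gives $t_H+\overline S(y,\alpha)+\epsilon(x)r=t_G+t_H+\epsilon(x)r$, using the hypotheses $1\in S(x,\alpha)$ and $\overline S(y,\alpha)=t_G$. An application of Lemma \ref{ourlemma} then yields the desired interval $(t_G+t_H+\epsilon(x)r)$-coloring of $G\square H$, proving the bound. The main obstacle is the bookkeeping at boundary levels where $S_x^{-}(v,\alpha)$ or $S_x^{+}(v,\alpha)$ is empty; the two conventions introduced for $q_v$ handle these uniformly by allowing one of the three blocks to degenerate to $\emptyset$ while preserving the adjacency pattern.
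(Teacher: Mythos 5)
Your proposal is correct and follows essentially the same route as the paper: the same edge-coloring $\gamma$ (your $q_v+ir$ shift for the $H$-copies and $(i-1)r$ shift for the $E_i$-edges match the paper's rules exactly, with your convention $q_x=0$ playing the role of the paper's separate rule for $x$), the same three-block decomposition of each spectrum, and the same use of $\overline S(v_j,\beta)-\underline S(v_j,\beta)=r-1$ together with $\min S_x^{+}(v,\alpha)=\max S_x^{-}(v,\alpha)+1$ before invoking Lemma \ref{ourlemma}. No substantive differences to report.
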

\begin{proof}
Let $\alpha$ be a separable interval $t_{G}$-coloring of $G$  with respect to $x\in V(G)$ and $\beta$ be an interval $t_{H}$-coloring of $H$, respectively. For the proof, we need to construct an interval $(t_{G}+t_{H}+\epsilon(x)r)$-coloring of $G\square H$. 

Let us define an edge-coloring $\gamma$ of $G\square H$ as follows:
    
\begin{description}
    \item[(1)] for any $vv'\in E(H)$, let 
$$\gamma\left((x,v)(x,v')\right)=\beta(vv');$$
    \item[(2)] for each $i$ ($0 \leq i \leq \epsilon(x)$) and $u\in N_{i}(x)$, $vv'\in E(H)$, let 
$$\gamma\left((u,v)(u,v')\right)=\beta(vv')+\max S_{x}^{-}(u,\alpha)+ir;$$   
    \item[(3)] for each $i$ ($1 \leq i \leq \epsilon(x)$) and $uu'\in E_{i}(x)$, $v\in V(H)$, let 
$$\gamma\left((u,v)(u',v)\right)=\alpha(uu')+\overline S\left(v,\beta\right)+ir-r.$$   
\end{description}   

Let us show that $\gamma$ is an interval $(t_{G}+t_{H}+\epsilon(x)r)$-coloring of the graph $G\square H$.

For each vertex $(u,v)\in V(G\square H)$, decompose the spectrum $S((u,v),\gamma)$ into three sets as follows:

$$S((u,v),\gamma)=S_{x}^{-}((u,v),\gamma)\cup S_{x}^{0}((u,v),\gamma)\cup S_{x}^{+}((u,v),\gamma), \text{~where}$$
$$S_{x}^{0}((u,v),\gamma)=\{\gamma((u,v)(u,v')): vv'\in E(H)\},$$
$$S_{x}^{+}((u,v),\gamma)=\{\gamma((u,v)(u',v)): uu'\in E_{i+1}(x)\},$$
$$S_{x}^{-}((u,v),\gamma)=\{\gamma((u,v)(u',v)): uu'\in E_{i}(x)\},$$
$$u\in N_{i}(x), i=0,1,\ldots,\epsilon(x)\text{~(See Fig. 3).}$$

We set $S_{x}^{-}((x,v),\gamma)=\emptyset$, and if $u\in N_{\epsilon(x)}(x)$, then we set $S_{x}^{+}((u,v),\gamma)=\emptyset$.

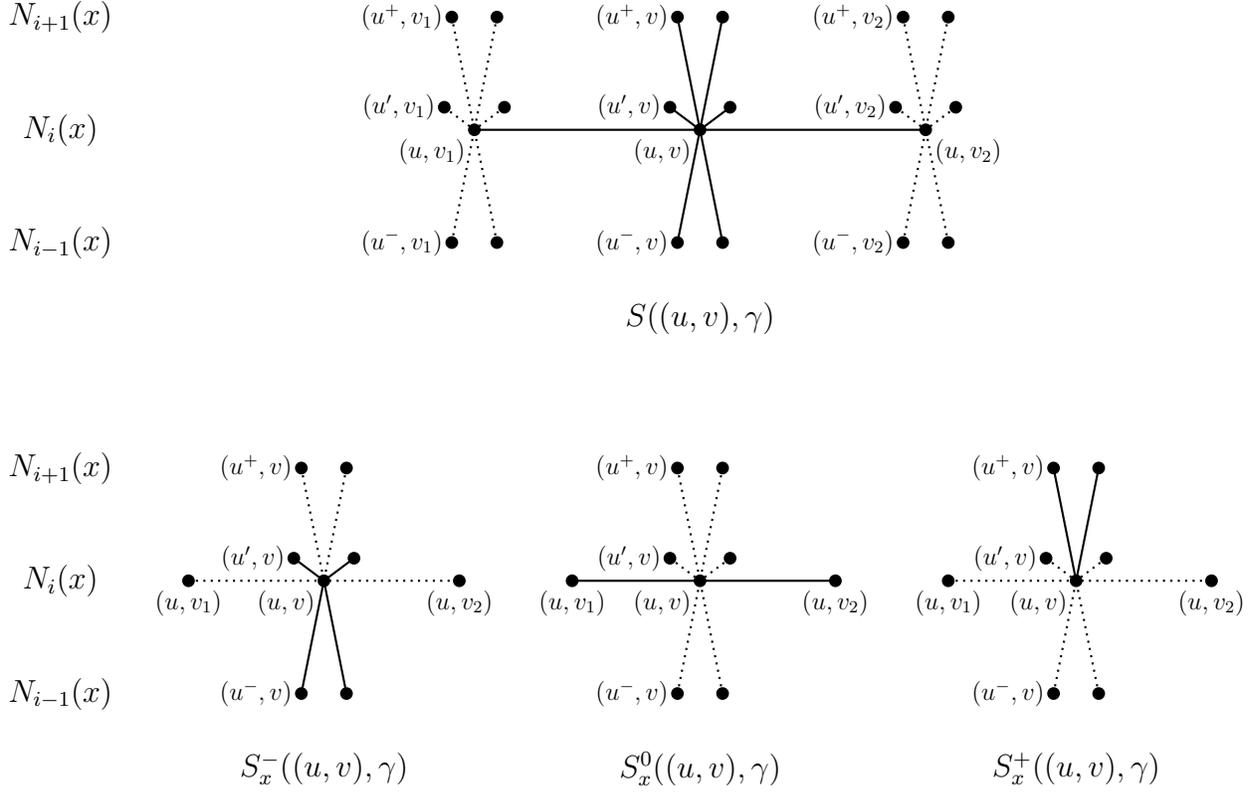
\begin{figure}[t!]
\centering
\begin{tikzpicture}[style=thick]
    \node at (-4cm, 1cm) {$N_{i-1}(x)$};
    \node at (-4cm, 2.5cm) {$N_i(x)$};
    \node at (-4cm, 4cm) {$N_{i+1}(x)$};
    \node at (-4cm, 7cm) {$N_{i-1}(x)$};
    \node at (-4cm, 8.5cm) {$N_i(x)$};
    \node at (-4cm, 10cm) {$N_{i+1}(x)$};
    
    \coordinate (Va111) at (1.2cm,7cm);
    \coordinate (Va113) at (1.8cm,7cm);
    \coordinate (Va121) at (1.1cm,8.8cm);
    \coordinate (Va122) at (1.5cm,8.5cm);
    \coordinate (Va123) at (1.9cm,8.8cm);
    \coordinate (Va131) at (1.2cm,10cm);
    \coordinate (Va133) at (1.8cm,10cm);
    
    \draw[dotted] (Va111) -- (Va122);
    \draw[dotted] (Va113) -- (Va122);
    \draw[dotted] (Va121) -- (Va122);
    \draw[dotted] (Va123) -- (Va122);
    \draw[dotted] (Va131) -- (Va122);
    \draw[dotted] (Va133) -- (Va122);
    
    \draw[fill=black] (Va111) circle (2pt) node[left,scale=0.8] {$(u^-,v_1)$};
    \draw[fill=black] (Va113) circle (2pt);
    \draw[fill=black] (Va121) circle (2pt) node[left,scale=0.8] {$(u',v_1)$};
    \draw[fill=black] (Va122) circle (2pt) node[below left,scale=0.8] {$(u,v_1)$};
    \draw[fill=black] (Va123) circle (2pt);
    \draw[fill=black] (Va131) circle (2pt) node[left,scale=0.8] {$(u^+,v_1)$};
    \draw[fill=black] (Va133) circle (2pt);
    
    \coordinate (Va211) at (4.2cm,7cm);
    \coordinate (Va213) at (4.8cm,7cm);
    \coordinate (Va221) at (4.1cm,8.8cm);
    \coordinate (Va222) at (4.5cm,8.5cm);
    \coordinate (Va223) at (4.9cm,8.8cm);
    \coordinate (Va231) at (4.2cm,10cm);
    \coordinate (Va233) at (4.8cm,10cm);
    
    \draw (Va211) -- (Va222);
    \draw (Va213) -- (Va222);
    \draw (Va221) -- (Va222);
    \draw (Va223) -- (Va222);
    \draw (Va231) -- (Va222);
    \draw (Va233) -- (Va222);
    
    \draw[fill=black] (Va211) circle (2pt) node[left,scale=0.8] {$(u^-,v)$};
    \draw[fill=black] (Va213) circle (2pt);
    \draw[fill=black] (Va221) circle (2pt) node[left,scale=0.8] {$(u',v)$};
    \draw[fill=black] (Va222) circle (2pt) node[below left,scale=0.8] {$(u,v)$};
    \draw[fill=black] (Va223) circle (2pt);
    \draw[fill=black] (Va231) circle (2pt) node[left,scale=0.8] {$(u^+,v)$};
    \draw[fill=black] (Va233) circle (2pt);
    
    \coordinate (Va311) at (7.2cm,7cm);
    \coordinate (Va313) at (7.8cm,7cm);
    \coordinate (Va321) at (7.1cm,8.8cm);
    \coordinate (Va322) at (7.5cm,8.5cm);
    \coordinate (Va323) at (7.9cm,8.8cm);
    \coordinate (Va331) at (7.2cm,10cm);
    \coordinate (Va333) at (7.8cm,10cm);
    
    \draw[dotted] (Va311) -- (Va322);
    \draw[dotted] (Va313) -- (Va322);
    \draw[dotted] (Va321) -- (Va322);
    \draw[dotted] (Va323) -- (Va322);
    \draw[dotted] (Va331) -- (Va322);
    \draw[dotted] (Va333) -- (Va322);
    
    \draw[fill=black] (Va311) circle (2pt) node[left,scale=0.8] {$(u^-,v_2)$};
    \draw[fill=black] (Va313) circle (2pt);
    \draw[fill=black] (Va321) circle (2pt) node[left,scale=0.8] {$(u',v_2)$};
    \draw[fill=black] (Va322) circle (2pt) node[below right,scale=0.8] {$(u,v_2)$};
    \draw[fill=black] (Va323) circle (2pt);
    \draw[fill=black] (Va331) circle (2pt) node[left,scale=0.8] {$(u^+,v_2)$};
    \draw[fill=black] (Va333) circle (2pt);
    
    \draw (Va122) -- (Va222) -- (Va322);
    
    \node at (4.5cm, 6cm) {$S((u,v),\gamma)$};
    
    \coordinate (Vc211) at (4.2cm,1cm);
    \coordinate (Vc213) at (4.8cm,1cm);
    \coordinate (Vc221) at (4.1cm,2.8cm);
    \coordinate (Vc222) at (4.5cm,2.5cm);
    \coordinate (Vc223) at (4.9cm,2.8cm);
    \coordinate (Vc231) at (4.2cm,4cm);
    \coordinate (Vc233) at (4.8cm,4cm);
    
    \draw[dotted] (Vc211) -- (Vc222);
    \draw[dotted] (Vc213) -- (Vc222);
    \draw[dotted] (Vc221) -- (Vc222);
    \draw[dotted] (Vc223) -- (Vc222);
    \draw[dotted] (Vc231) -- (Vc222);
    \draw[dotted] (Vc233) -- (Vc222);
    
    \draw[fill=black] (Vc211) circle (2pt) node[left,scale=0.8] {$(u^-,v)$};
    \draw[fill=black] (Vc213) circle (2pt);
    \draw[fill=black] (Vc221) circle (2pt) node[left,scale=0.8] {$(u',v)$};
    \draw[fill=black] (Vc222) circle (2pt) node[below left,scale=0.8] {$(u,v)$};
    \draw[fill=black] (Vc223) circle (2pt);
    \draw[fill=black] (Vc231) circle (2pt) node[left,scale=0.8] {$(u^+,v)$};
    \draw[fill=black] (Vc233) circle (2pt);
    
    \coordinate (Vc322) at (6.3cm,2.5cm);
    \coordinate (Vc122) at (2.8cm,2.5cm);
    \draw[fill=black] (Vc322) circle (2pt) node[below,scale=0.8] {$(u,v_2)$};
    \draw[fill=black] (Vc122) circle (2pt) node[below,scale=0.8] {$(u,v_1)$};
    \draw (Vc122) -- (Vc222) -- (Vc322);
    
    \node at (4.5cm, 0cm) {$S_x^0((u,v),\gamma)$};
    
    \coordinate (Vb211) at (-0.8cm,1cm);
    \coordinate (Vb213) at (-0.2cm,1cm);
    \coordinate (Vb221) at (-0.9cm,2.8cm);
    \coordinate (Vb222) at (-0.5cm,2.5cm);
    \coordinate (Vb223) at (-0.1cm,2.8cm);
    \coordinate (Vb231) at (-0.8cm,4cm);
    \coordinate (Vb233) at (-0.2cm,4cm);
    
    \draw (Vb211) -- (Vb222);
    \draw (Vb213) -- (Vb222);
    \draw (Vb221) -- (Vb222);
    \draw (Vb223) -- (Vb222);
    \draw[dotted] (Vb231) -- (Vb222);
    \draw[dotted] (Vb233) -- (Vb222);
    
    \draw[fill=black] (Vb211) circle (2pt) node[left,scale=0.8] {$(u^-,v)$};
    \draw[fill=black] (Vb213) circle (2pt);
    \draw[fill=black] (Vb221) circle (2pt) node[left,scale=0.8] {$(u',v)$};
    \draw[fill=black] (Vb222) circle (2pt) node[below left,scale=0.8] {$(u,v)$};
    \draw[fill=black] (Vb223) circle (2pt);
    \draw[fill=black] (Vb231) circle (2pt) node[left,scale=0.8] {$(u^+,v)$};
    \draw[fill=black] (Vb233) circle (2pt);
    
    \coordinate (Vb322) at (1.3cm,2.5cm);
    \coordinate (Vb122) at (-2.3cm,2.5cm);
    \draw[fill=black] (Vb322) circle (2pt) node[below,scale=0.8] {$(u,v_2)$};
    \draw[fill=black] (Vb122) circle (2pt) node[below,scale=0.8] {$(u,v_1)$};
    \draw[dotted] (Vb122) -- (Vb222) -- (Vb322);
    
    \node at (-0.5cm, 0cm) {$S_x^-((u,v),\gamma)$};
    
    \coordinate (Vd211) at (9.2cm,1cm);
    \coordinate (Vd213) at (9.8cm,1cm);
    \coordinate (Vd221) at (9.1cm,2.8cm);
    \coordinate (Vd222) at (9.5cm,2.5cm);
    \coordinate (Vd223) at (9.9cm,2.8cm);
    \coordinate (Vd231) at (9.2cm,4cm);
    \coordinate (Vd233) at (9.8cm,4cm);
    
    \draw[dotted] (Vd211) -- (Vd222);
    \draw[dotted] (Vd213) -- (Vd222);
    \draw[dotted] (Vd221) -- (Vd222);
    \draw[dotted] (Vd223) -- (Vd222);
    \draw (Vd231) -- (Vd222);
    \draw (Vd233) -- (Vd222);
    
    \draw[fill=black] (Vd211) circle (2pt) node[left,scale=0.8] {$(u^-,v)$};
    \draw[fill=black] (Vd213) circle (2pt);
    \draw[fill=black] (Vd221) circle (2pt) node[left,scale=0.8] {$(u',v)$};
    \draw[fill=black] (Vd222) circle (2pt) node[below left,scale=0.8] {$(u,v)$};
    \draw[fill=black] (Vd223) circle (2pt);
    \draw[fill=black] (Vd231) circle (2pt) node[left,scale=0.8] {$(u^+,v)$};
    \draw[fill=black] (Vd233) circle (2pt);
    
    \coordinate (Vd322) at (11.3cm,2.5cm);
    \coordinate (Vd122) at (7.8cm,2.5cm);
    \draw[fill=black] (Vd322) circle (2pt) node[below,scale=0.8] {$(u,v_2)$};
    \draw[fill=black] (Vd122) circle (2pt) node[below,scale=0.8] {$(u,v_1)$};
    \draw[dotted] (Vd122) -- (Vd222) -- (Vd322);
    
    \node at (9.5cm, 0cm) {$S_x^+((u,v),\gamma)$};
\end{tikzpicture}
\caption{Decomposition of the spectrum $S((u,v),\gamma)$ into three sets.}
\label{separable}
\end{figure}

By the definition of $\gamma$ and taking into account that $\overline S\left(v,\beta \right)-\underline S\left(v,\beta \right)=r-1$ for any $v\in V(H)$, we have

\begin{eqnarray*}
S_{x}^{0}((x,v),\gamma) &=& \{\beta(vv'): vv'\in E(H)\} =
\left[\underline S\left(v,\beta \right),\overline S\left(v,\beta \right)\right],\\ 
S_{x}^{0}((u,v),\gamma) &=& \{\beta(vv')+\max S_{x}^{-}(u,\alpha)+ir: vv'\in E(H)\} =\\
&=&\left[\underline S\left(v,\beta \right)+\max S_{x}^{-}(u,\alpha)+ir,\overline S\left(v,\beta \right)+\max S_{x}^{-}(u,\alpha)+ir\right].
\end{eqnarray*}

By the definition of $\gamma$ and taking into account that $\alpha$ is a separable interval coloring of $G$ with respect to $x$, we have

\begin{eqnarray*}
S_{x}^{+}((u,v),\gamma) &=& \{\alpha(uu')+\overline S\left(v,\beta \right)+(i+1)r-r: uu'\in E_{i+1}(x)\} =\\
&=& \left[\min S_{x}^{+}(u,\alpha)+\overline S\left(v,\beta \right)+ir,\max S_{x}^{+}(u,\alpha)+\overline S\left(v,\beta \right)+ir\right],\\ 
S_{x}^{-}((u,v),\gamma) &=& \{\alpha(uu')+\overline S\left(v,\beta \right)+ir-r: uu'\in E_{i}(x)\} =\\
&=&\left[\min S_{x}^{-}(u,\alpha)+\overline S\left(v,\beta \right)+ir-r,\max S_{x}^{-}(u,\alpha)+\overline S\left(v,\beta \right)+ir-r\right].
\end{eqnarray*}

Since $1\in S(x,\alpha)$, we have $\min S_{x}^{-}(x,\alpha)=1$ and for any $v\in V(H)$,

$$S((x,v),\gamma)=\left[\underline S\left(v,\beta \right),\max S_{x}^{+}(x,\alpha)+\overline S\left(v,\beta \right)\right].$$

Since $\underline S\left(v,\beta \right)-(\overline S\left(v,\beta \right)-r)=1$ and $\min S_{x}^{+}(u,\alpha)-\max S_{x}^{-}(u,\alpha)=1$, we have that for any $u\in V(G)$ ($u\neq x$) and $v\in V(H)$,

$$S((u,v),\gamma)=\left[\min S_{x}^{-}(u,\alpha)+\overline S\left(v,\beta \right)+ir-r,\max S_{x}^{+}(u,\alpha)+\overline S\left(v,\beta \right)+ir\right].$$

This shows that $\gamma$ is a proper edge-coloring of $G\square H$ such that for each $(u,v)\in V(G\square H)$, $S((u,v),\gamma)$ is an interval of integers. Since $\beta$ is an interval $t_{H}$-coloring of $H$, there are $v',v''\in V(H)$ such that $1\in S(v',\beta)$ and $t_{H}\in S(v'',\beta)$. On the other hand, since $1\in S(x,\alpha)$ and there exists a vertex $y\in N_{\epsilon(x)}(x)$ with $t_{G}\in S(y,\alpha)$, we obtain

$$1\in S((x,v'),\gamma)\text{~and~}t_{G}+t_{H}+\epsilon(x)r\in S((y,v''),\gamma).$$

From this and by Lemma \ref{ourlemma}, $\gamma$ is an interval $(t_{G}+t_{H}+\epsilon(x)r)$-coloring of $G\square H$. 
\end{proof}

\begin{corollary}\label{manygraphs}
If $G$ is an even cycle, or simple path, or $n$-dimesional cube, or caterpillar tree, or complete bipartite graph, and $H$ is an interval colorable $r$-regular graph, then
$$W(G\square H)\geq W(G)+W(H)+\mathrm{diam}(G)r.$$
In particular, we have:
\begin{description}
    \item[(a)] $$W(C_{2n}\square H)\geq n(r+1)+W(H)+1;$$
    \item[(b)] $$W(P_{n}\square H)\geq (n-1)(r+1)+W(H);$$ 
    \item[(c)] $$W(Q_{n}\square H)\geq \frac{n(n+2r+1)}{2}+W(H);$$
    \item[(d)] $$W(T\square H)\geq |E(T)|+W(H)+\mathrm{diam}(T)r;$$
    \item[(e)] $$W(K_{m,n}\square H)\geq m+n+2r+W(H)-1.$$
\end{description} 
\end{corollary}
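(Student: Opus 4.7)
The strategy is to apply Theorem \ref{th:separableintervalcoloring}. Since that theorem gives $W(G\square H)\geq t_{G}+t_{H}+\epsilon(x)r$, for each family of graphs $G$ in the hypothesis it suffices to exhibit a separable interval $W(G)$-coloring $\alpha$ with respect to some vertex $x$ of eccentricity $\mathrm{diam}(G)$, satisfying $1\in S(x,\alpha)$ and $W(G)\in S(y,\alpha)$ for some $y\in N_{\mathrm{diam}(G)}(x)$. Taking $t_{G}=W(G)$ and an interval $W(H)$-coloring of $H$ then yields the main bound $W(G\square H)\geq W(G)+W(H)+\mathrm{diam}(G)r$.

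For $P_{n}$, take $x$ to be an endpoint and color the edges consecutively $1,2,\ldots,n-1$ along the path. For $C_{2n}$, fix $x=u_{0}$ and assign the colors $1,2,\ldots,n,n+1,n,\ldots,2$ going around the cycle. For $K_{m,n}$ with parts $\{a_{1},\ldots,a_{m}\}$ and $\{b_{1},\ldots,b_{n}\}$, use the classical coloring $\alpha(a_{i}b_{j})=i+j-1$ with $x=a_{1}$. In each of these three cases, at every vertex $v$ the spectra $S_{x}^{-}(v,\alpha)$ and $S_{x}^{+}(v,\alpha)$ turn out to be consecutive blocks with the lower block preceding the upper one, so separability is transparent; the boundary conditions $1\in S(x,\alpha)$ and $W(G)\in S(y,\alpha)$ are also immediate.

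For a caterpillar $T$ with spine $p_{1}p_{2}\cdots p_{k}$, take $x$ to be a leaf of $p_{1}$ chosen so that $\epsilon(x)=\mathrm{diam}(T)$ and color greedily: assign $\alpha(xp_{1})=1$, then color the remaining pendant edges at $p_{1}$ and the spine edge $p_{1}p_{2}$ with the next consecutive colors, and iterate at $p_{2},p_{3},\ldots,p_{k}$. This uses exactly $|E(T)|$ colors and, by construction, the unique edge of $p_{i}$ lying on the shortest path to $x$ carries the smallest color at $p_{i}$, which gives separability and in particular shows $W(T)=|E(T)|$. For the hypercube $Q_{n}$, the explicit interval $\tfrac{n(n+1)}{2}$-coloring of \cite{PetrosyanKhachatrianTananyan} taken with respect to $x=0^{n}$ has the property that the colors used on $E_{i}(x)$ lie in a block strictly below those used on $E_{i+1}(x)$; from this monotonicity the separability property is read off.

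The specific bounds (a)--(e) now follow by substituting the values $W(C_{2n})=n+1$, $W(P_{n})=n-1$, $W(Q_{n})=\tfrac{n(n+1)}{2}$, $W(T)=|E(T)|$ and $W(K_{m,n})=m+n-1$, together with the corresponding diameters $n$, $n-1$, $n$, $\mathrm{diam}(T)$ and $2$, into the general inequality. The main technical obstacle is the verification of separability for $Q_{n}$, since the coloring of \cite{PetrosyanKhachatrianTananyan} is defined recursively and the monotone-levels property must be tracked through the induction; the other four constructions are checked directly from their definitions.
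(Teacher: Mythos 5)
Your proposal is correct and follows essentially the same route as the paper: invoke Theorem \ref{th:separableintervalcoloring} after exhibiting, for each family, a separable interval $W(G)$-coloring with respect to a vertex $x$ of eccentricity $\mathrm{diam}(G)$ satisfying the boundary conditions, using the same explicit colorings (the standard cycle and $K_{m,n}$ colorings, the greedy caterpillar coloring, and the cited hypercube coloring from \cite{PetrosyanKhachatrianTananyan}). The one place you leave unverified in detail --- separability of the recursive $Q_{n}$-coloring --- is also asserted without full verification in the paper, so your write-up matches the paper's level of rigor there.
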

\begin{proof}
For the proof, we need to show that graphs from the mentioned classes are satisfied Theorem \ref{th:separableintervalcoloring} conditions.

\textbf{(a). $\mathbf{G=C_{2n}}$}

Let $V(C_{2n})=\left\{x,v_1,\ldots,v_{n-1},y,u_{n-1},\ldots,u_1\right\}$. It is easy to see that an interval $W(C_{2n})$-coloring of $C_{2n}$ can be defined as follows:

\bigskip
\begin{tabular}{lll}
$\alpha(xv_1)=1$ & $\alpha(xu_1)=2$ \\
$\alpha(v_iv_{i+1})=i+1$ &$\alpha(u_iu_{i+1})=i+2$ & where $i=1,\ldots,n-2$\\
$\alpha(v_{n-1}y)=n$ &$\alpha(u_{n-1}y)=n+1$\\
\end{tabular}
\bigskip

Let us now consider the level representation of $C_{2n}$ with respect to $x$. In this case lower and upper spectrums of the vertices have the following form:

\bigskip
\begin{tabular}{lll}
$S_x^-(v_i,\alpha)=\left\{i\right\}$ &$S_x^+(v_i,\alpha)=\left\{i+1\right\}$ & where  $i=1,\ldots,n-2$\\
$S_x^-(u_i,\alpha)=\left\{i+1\right\}$ &$S_x^+(u_i,\alpha)=\left\{i+2\right\}$ & where  $i=1,\ldots,n-2$\\
\end{tabular}
\bigskip

Hence,

\bigskip
\begin{tabular}{lll}
$i = \max{S_x^-(v_i,\alpha)} < \min{S_x^+(v_i,\alpha)} = i+1$ & where $i=1,\ldots,n-2$\\
$i+1 = \max{S_x^-(u_i,\alpha)} < \min{S_x^+(u_i,\alpha)} = i+2$ & where  $i=1,\ldots,n-2$\\
$S_x^+(y,\alpha)=\emptyset$
\end{tabular}
\bigskip

This implies that $\alpha$ is a separable interval coloring of $C_{2n}$ with respect to $x$. Moreover, $1\in S(x,\alpha)$, $W(C_{2n})=n+1 \in S(y,\alpha)$ and $d(x,y)=\epsilon(x)=\mathrm{diam}(C_{2n})=n$. So, by Theorem \ref{th:separableintervalcoloring}, we obtain the result.

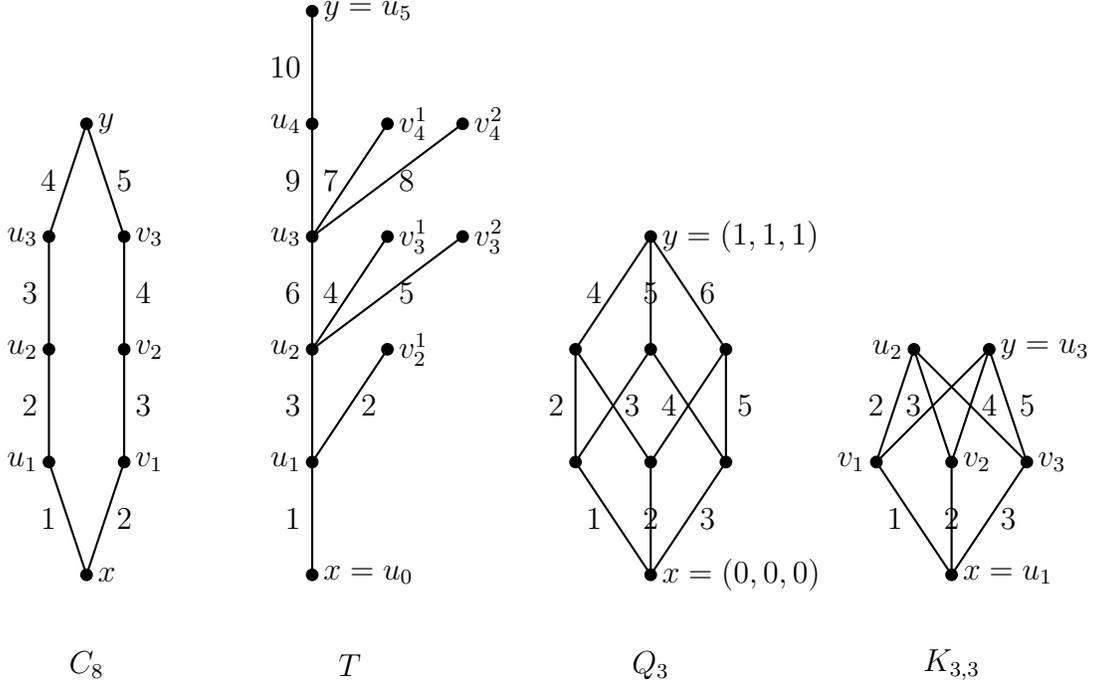
\begin{figure}[t!]
\centering
\begin{tikzpicture}[style=thick]
    \coordinate (c0) at (-0.5cm,1.5cm);
    \coordinate (c11) at (-1cm,3cm);
    \coordinate (c12) at (-1cm,4.5cm);
    \coordinate (c13) at (-1cm,6cm);
    \coordinate (c21) at (0cm,3cm);
    \coordinate (c22) at (0cm,4.5cm);
    \coordinate (c23) at (0cm,6cm);
    \coordinate (c4) at (-0.5cm,7.5cm);
    
    \draw (c0) -- node[left] {$1$} (c11) 
    -- node[left] {$2$} (c12) 
    -- node[left] {$3$} (c13) 
    -- node[left] {$4$} (c4);
    \draw (c0) -- node[right] {$2$} (c21) 
    -- node[right] {$3$} (c22) 
    -- node[right] {$4$} (c23) 
    -- node[right] {$5$} (c4);
    
    \draw[fill=black] (c0) circle (2pt)  node[right] {$x$};
    \draw[fill=black] (c11) circle (2pt) node[left] {$u_1$};
    \draw[fill=black] (c12) circle (2pt) node[left] {$u_2$};
    \draw[fill=black] (c13) circle (2pt) node[left] {$u_3$};
    \draw[fill=black] (c21) circle (2pt) node[right] {$v_1$};
    \draw[fill=black] (c22) circle (2pt) node[right] {$v_2$};
    \draw[fill=black] (c23) circle (2pt) node[right] {$v_3$};
    \draw[fill=black] (c4) circle (2pt) node[right] {$y$};
    
    \node at (-0.5cm, 0.3cm) {$C_8$};
    
    \coordinate (t0) at (2.5cm,1.5cm);
    \coordinate (t1) at (2.5cm,3cm);
    \coordinate (t21) at (2.5cm,4.5cm);
    \coordinate (t22) at (3.5cm,4.5cm);
    
    \coordinate (t31) at (2.5cm,6cm);
    \coordinate (t32) at (3.5cm,6cm);
    \coordinate (t33) at (4.5cm,6cm);
    
    \coordinate (t41) at (2.5cm,7.5cm);
    \coordinate (t42) at (3.5cm,7.5cm);
    \coordinate (t43) at (4.5cm,7.5cm);
    
    \coordinate (t5) at (2.5cm,9cm);
    
    \draw (t0) -- node[left] {$1$} (t1);
    \draw (t1) -- node[left] {$3$} (t21);
    \draw (t1) -- node[right] {$2$} (t22);
    \draw (t21) -- node[left] {$6$} (t31);
    \draw (t21) -- node[left] {$4$} (t32);
    \draw (t21) -- node[right] {$5$} (t33);
    \draw (t31) -- node[left] {$9$} (t41);
    \draw (t31) -- node[left] {$7$} (t42);
    \draw (t31) -- node[right] {$8$} (t43);
    \draw (t41) -- node[left] {$10$} (t5);
    
    \draw[fill=black] (t0) circle (2pt)  node[right] {$x=u_0$};
    \draw[fill=black] (t1) circle (2pt) node[left] {$u_1$};
    \draw[fill=black] (t21) circle (2pt) node[left] {$u_2$};
    \draw[fill=black] (t22) circle (2pt) node[right] {$v_2^1$};
    \draw[fill=black] (t31) circle (2pt) node[left] {$u_3$};
    \draw[fill=black] (t32) circle (2pt) node[right] {$v_3^1$};
    \draw[fill=black] (t33) circle (2pt) node[right] {$v_3^2$};
    \draw[fill=black] (t41) circle (2pt) node[left] {$u_4$};
    \draw[fill=black] (t42) circle (2pt) node[right] {$v_4^1$};
    \draw[fill=black] (t43) circle (2pt) node[right] {$v_4^2$};
    \draw[fill=black] (t5) circle (2pt)  node[right] {$y=u_5$};
    
    \node at (3cm, 0.3cm) {$T$};

    \coordinate (q000) at (7cm,1.5cm);
    \coordinate (q100) at (6cm,3cm);
    \coordinate (q010) at (7cm,3cm);
    \coordinate (q001) at (8cm,3cm);
    
    \coordinate (q110) at (6cm,4.5cm);
    \coordinate (q101) at (7cm,4.5cm);
    \coordinate (q011) at (8cm,4.5cm);
    \coordinate (q111) at (7cm,6cm);
    
    \draw (q000) -- node[left] {$1$} (q100);
    \draw (q000) -- node {$2$} (q010);
    \draw (q000) -- node[right] {$3$} (q001);
    
    \draw (q100) -- node[left] {$2$} (q110);
    \draw (q100) --  (q101);
    \draw (q010) -- node[right] {$3$} (q110);
    \draw (q010) --  (q011);
    \draw (q001) -- node[left] {$4$} (q101);
    \draw (q001) -- node[right] {$5$} (q011);
    
    \draw (q110) -- node[left] {$4$} (q111);
    \draw (q101) -- node {$5$} (q111);
    \draw (q011) -- node[right] {$6$} (q111);
    
    \draw[fill=black] (q000) circle (2pt)  node[right] {$x=(0,0,0)$};
    \draw[fill=black] (q100) circle (2pt);  
    \draw[fill=black] (q010) circle (2pt);  
    \draw[fill=black] (q001) circle (2pt);  
    \draw[fill=black] (q110) circle (2pt);  
    \draw[fill=black] (q101) circle (2pt);  
    \draw[fill=black] (q011) circle (2pt);  
    \draw[fill=black] (q111) circle (2pt) node[right] {$y=(1,1,1)$};  
    
    \node at (7cm, 0.3cm) {$Q_3$};

    \coordinate (b0) at (11cm,1.5cm);
    \coordinate (b11) at (10cm,3cm);
    \coordinate (b12) at (11cm,3cm);
    \coordinate (b13) at (12cm,3cm);
    
    \coordinate (b21) at (10.5cm,4.5cm);
    \coordinate (b22) at (11.5cm,4.5cm);
    
    \draw (b0) -- node[left] {$1$} (b11);
    \draw (b0) -- node {$2$} (b12);
    \draw (b0) -- node[right] {$3$} (b13);
    
    \draw (b11) -- node[left] {$2$} (b21);
    \draw (b11) -- node[left] {$3$} (b22);
    \draw (b12) --  (b21);
    \draw (b12) --  (b22);
    \draw (b13) -- node[right] {$4$} (b21);
    \draw (b13) -- node[right] {$5$} (b22);
    
    \draw[fill=black] (b0) circle (2pt)  node[right] {$x=u_1$};
    \draw[fill=black] (b11) circle (2pt)  node[left] {$v_1$};  
    \draw[fill=black] (b12) circle (2pt)  node[right] {$v_2$};  
    \draw[fill=black] (b13) circle (2pt)  node[right] {$v_3$};  
    \draw[fill=black] (b21) circle (2pt)  node[left] {$u_2$};  
    \draw[fill=black] (b22) circle (2pt)  node[right] {$y=u_3$};  
    
    \node at (11cm, 0.3cm) {$K_{3,3}$};

\end{tikzpicture}
\caption{Separable interval colorings of $C_8$, caterpillar tree $T$, $Q_{3}$ and $K_{3,3}$}
\label{separableExamples}
\end{figure}

\bigskip

\textbf{(b),(d). $\mathbf{G=T}$, $T$ is a caterpillar tree}

Let 
\begin{center}
$V(T) = \left\{u_0,u_1,v_1^1,\ldots,v_1^{k_1},u_2,v_2^1,\ldots,v_2^{k_2},\ldots,u_{n-1},v_{n-1}^1,\ldots,v_{n-1}^{k_n},u_n\right\}$
\end{center}
where $n\geq 1$, $k_i\geq 0$, $i=1,\ldots,n-1$. 

Also, let
\begin{center}
$E(T) = \left\{u_iu_{i+1} : i=0,\ldots,n-1\right\} \cup \left\{u_iv_i^j : i=1,\ldots,n-1,j=1,\ldots,k_i\right\}$
\end{center}
We define an interval $|E(T)|$-coloring $\beta$ of $T$ as follows:
\begin{center}
\begin{tabular}{lll}
$\beta(u_iu_{i+1}) = \sum\limits_{s=1}^{i}{k_s}+i+1$ & where $i=0,\ldots,n-1$ \\
$\beta(u_iv_i^j) = \sum\limits_{s=1}^{i-1}{k_s}+i+j$ & where $i=1,\ldots,n-1$, $j=1,\ldots,k_i$ \\
\end{tabular}
\end{center}
Let us now consider the level representation of $T$ with respect to $u_0$. In this case lower and upper spectrums of the vertices have the following form:
\begin{center}
\begin{tabular}{lll}
$S_{u_0}^-(u_i,\beta) = \left\{\sum\limits_{s=1}^{i}{k_s}+i+1\right\}$ & where $i=1,\ldots,n$ \\
$S_{u_0}^+(u_i,\beta) = \left[\sum\limits_{s=1}^{i-1}{k_s}+i+1,\sum\limits_{s=1}^{i}{k_s}+i+1\right]$ & where $i=0,\ldots,n-1$ \\
$S_{u_0}^-(v_i^j,\beta) = \left\{\sum\limits_{s=1}^{i-1}{k_s}+i+j\right\}$ & where $i=0,\ldots,n-1$, $j=1,\ldots,k_i$ \\
$S_{u_0}^+(v_i^j,\beta) = \emptyset$, $S_{u_0}^+(u_n,\beta) = \emptyset$
\end{tabular}
\end{center}
This implies that $\beta$ is a separable interval coloring of $T$ with respect to $u_0$. Moreover, 
\begin{center}
\begin{tabular}{lll}
$1 \in S(u_0,\beta)$\\
$\sum\limits_{s=1}^{n-1}{k_s}+n=|E(T)|=W(T) \in S(u_n,\beta)$ \\
$d_T(u_0,u_n)=n=\mathrm{diam}(T)$\\
\end{tabular}
\end{center}
So, by Theorem \ref{th:separableintervalcoloring}, we obtain the result.

\bigskip

\textbf{(c). $\mathbf{G=Q_n}$}
\begin{center}
Let $V(Q_n)=\left\{(b_1,\ldots,b_n) : b_i \in \left\{0,1\right\}\right\}$
\end{center}
In \cite{Petrosyan,PetrosyanKhachatrianTananyan}, it was shown $Q_{n}$ has an interval $\frac{n(n+1)}{2}$-coloring $\gamma$ and $W(Q_{n})=\frac{n(n+1)}{2}$. 
Moreover, by the construction of the coloring $\gamma$, it follows that $\gamma$ is a separable interval coloring with respect to $(0,\ldots,0)$ and
\begin{center}
\begin{tabular}{lll}
$1 \in S((0,\ldots,0),\gamma)$\\
$\frac{n(n+1)}{2}=W(Q_n) \in S((1,\ldots,1),\gamma)$ \\
$d_{Q_n}((0,\ldots,0),(1,\ldots,1))=n=\mathrm{diam}(Q_n)$\\
\end{tabular}
\end{center}
So, by Theorem \ref{th:separableintervalcoloring}, we obtain the result.

\textbf{(e). $\mathbf{G=K_{m,n}}$}
Let
\begin{align*}
V(K_{m,n}) &= \left\{u_1,\ldots,u_m,v_1,\ldots,v_n\right\} \\
E(K_{m,n}) &= \left\{u_iv_j : i=1,\ldots,m,\ j=1,\ldots,n\right\}
\end{align*}
In \cite{Kampreprint}, it was shown that $K_{m,n}$ has an interval $(m+n-1)$-coloring $\varphi$ ($W(K_{m,n})=m+n-1$) and this coloring can be defined as follows:
\begin{center}
$\varphi(u_iv_j)=i+j-1$, where $i=1,\ldots,m,\ j=1,\ldots,n$.
\end{center}
Let us now consider the level representation of $K_{m,n}$ with respect to $u_1$. In this case lower and upper spectrums of the vertices have the following form:
\begin{center}
\begin{tabular}{lll}
$S_{u_1}^-(v_j,\varphi)=\left\{ \varphi(u_1v_j) \right\} = \left\{ j \right\}$, & where $j=1,\ldots,n$ \\
$S_{u_1}^+(v_j,\varphi)=\left\{ \varphi(v_ju_i) : i=2,\ldots,m \right\} = \left[j+1,j+m-1 \right]$, & where $j=1,\ldots,n$ \\
$S_{u_1}^+(u_i,\varphi)=\emptyset$, & where $i=2,\ldots,m$.
\end{tabular}
\end{center}
This implies that $\varphi$ is a separable interval coloring of $K_{m,n}$ with respect to $u_1$. Moreover, 
\begin{center}
\begin{tabular}{lll}
$1 \in S(u_1,\varphi)$\\
$m+n-1=W(K_{m,n}) \in S(u_m,\varphi)$ \\
$d_{K_{m,n}}(u_1,u_m)=2=\mathrm{diam}(K_{m,n})$.\\
\end{tabular}
\end{center}
So, by Theorem \ref{th:separableintervalcoloring}, we obtain the result.
\end{proof}

\bigskip

\section{A new upper bound on $W(G)$} 

In \cite{AsrKamJCTB}, Asratian and Kamalian proved that if $G$ is a connected graph and $G\in \mathfrak{N}$, then $W(G)\leq (\mathrm{diam}(G)+1)(\Delta(G)-1)+1$. They also proved that if $G$ is a connected bipartite graph and $G\in \mathfrak{N}$, then this upper bound can be improved to $W(G)\leq \mathrm{diam}(G)(\Delta(G)-1)+1$. In this section we further improve these upper bounds. 

Let $G$ be a connected graph. Also, let $e,e^{\prime}\in E(G)$ and $e=u_{1}u_{2}$, $e^{\prime}=v_{1}v_{2}$. The distance between two edges $e$ and $e^{\prime}$ in $G$, we define as follows:

\begin{center}
$d(e,e^{\prime})=\min_{1\leq i\leq 2,1\leq j\leq 2}
d\left(u_{i},v_{j}\right)$.
\end{center}

For a graph $G$, define an edge-diameter $\mathrm{diam}_{e}(G)$ of a graph $G$ as follows:
$$\mathrm{diam}_{e}(G)=\max_{e,e'\in E(G)}d(e,e').$$

We say that $G\in \mathcal{D}(p_{1},\ldots,p_{\mathrm{diam}_{e}(G)})$ if $G\in \mathfrak{N}$ and for any pair of vertices $u,v\in V(G)$ with $d(u,v)=k$ $(1\leq k\leq \mathrm{diam}_{e}(G))$, there are distinct vertices $v_{1},\ldots,v_{p_{k}}$ such that
$d(u,v_{i})=p_{k}-1$ and $v_{i}v\in E(G)$ for $i=1,\ldots,p_{k}$. Clearly, if $G$ is a connected graph and $G\in \mathfrak{N}$, then $G\in \mathcal{D}(1,\ldots,1)$.

Let $\alpha$ be an interval $t$-coloring of $G$. Define an edge
span $\mathrm{sp}_{\alpha}\left(e,e^{\prime}\right)$ of edges $e$
and $e^\prime$ $\left(e,e^{\prime}\in E(G)\right)$ in coloring
$\alpha$ as follows:
\begin{center}
$\mathrm{sp}_{\alpha}\left(e,e^{\prime}\right)=\left\vert
\alpha(e)-\alpha(e^{\prime})\right\vert$.
\end{center}

For any $k, 0\leq k\leq \mathrm{diam}_{e}(G)$, define an edge span at distance $k$ $\mathrm{sp}_{\alpha,k}$ in coloring $\alpha$ as follows:

\begin{center}
$\mathrm{sp}_{\alpha,k}=\max
\left\{\mathrm{sp}_{\alpha}\left(e,e^{\prime}\right)\colon\,e,e^{\prime}\in
E(G)~and~d(e,e^{\prime})=k\right\}$.
\end{center}

Clearly, $\mathrm{sp}_{\alpha,0}=\Delta(G)-1$.

\begin{theorem}
\label{newupperbound} If $G$ is a connected graph and $G\in \mathcal{D}(p_{1},\ldots,p_{\mathrm{diam}_{e}(G)})$, then 
$$W\left(G\right)\leq \left(1+\mathrm{diam}_{e}(G)\right)\Delta(G)-\sum_{i=1}^{\mathrm{diam}_{e}(G)}p_{i}.$$
\end{theorem}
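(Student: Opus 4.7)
The plan is to take an interval $W(G)$-coloring $\alpha$ of $G$, fix an edge $e_0$ with $\alpha(e_0)=1$ and an edge $e^{*}$ with $\alpha(e^{*})=W(G)$, and telescope between their colors along a sequence of edges that descends through the distance levels from $e^{*}$ down to an edge incident to a chosen endpoint of $e_0$, absorbing a controlled color drop at each level. Concretely, pick endpoints $u\in e_0$ and $v\in e^{*}$ realizing $d(u,v)=d(e_0,e^{*})=:k$; then $k\leq\mathrm{diam}_{e}(G)$.

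The core technical step is the following one-step lemma: if $f$ is an edge incident to a vertex $w$ with $d(u,w)=j$ for some $1\leq j\leq k$, then there is a neighbor $w'$ of $w$ with $d(u,w')=j-1$ and $|\alpha(f)-\alpha(ww')|\leq \Delta(G)-p_j$. To prove it, I invoke the $\mathcal{D}$-property for the pair $u,w$ to obtain $p_j$ distinct neighbors $w'_1,\ldots,w'_{p_j}$ of $w$ at distance $j-1$ from $u$; the corresponding $p_j$ edges are incident to $w$, so their colors together with $\alpha(f)$ are distinct (or coincide only if $f=ww'_i$ for some $i$, in which case the step is trivial) and all lie inside $S(w,\alpha)$, an interval of $d(w)\leq\Delta(G)$ consecutive integers. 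A simple pigeonhole inside this interval then forces the distance from $\alpha(f)$ to the nearest $\alpha(ww'_i)$ to be at most $d(w)-p_j\leq\Delta(G)-p_j$; the extremal configuration is when $\alpha(f)$ sits at one end of $S(w,\alpha)$ and the $p_j$ competing colors occupy the $p_j$ farthest positions.

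Iterating the lemma from $f_k:=e^{*}$ (incident to $v=w^{(k)}$ with $d(u,v)=k$), I construct edges $f_k,f_{k-1},\ldots,f_0$ and vertices $w^{(k)},\ldots,w^{(0)}$ satisfying $d(u,w^{(j)})=j$, $f_{j-1}=w^{(j)}w^{(j-1)}$, and $|\alpha(f_j)-\alpha(f_{j-1})|\leq \Delta(G)-p_j$ for $j=1,\ldots,k$. Because $p_1=1$ (the only neighbor of $w^{(1)}$ at distance $0$ from $u$ is $u$ itself), we get $w^{(0)}=u$, so $f_0$ is incident to $u$. Since $\alpha(e_0)=1\in S(u,\alpha)$ and $S(u,\alpha)$ is an interval of positive integers, $S(u,\alpha)=[1,d(u)]$, and hence $\alpha(f_0)\leq d(u)\leq\Delta(G)$. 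Telescoping with the triangle inequality yields
\[
W(G)-1=\alpha(e^{*})-1\leq\sum_{j=1}^{k}\bigl|\alpha(f_j)-\alpha(f_{j-1})\bigr|+(\alpha(f_0)-1)\leq\sum_{j=1}^{k}(\Delta(G)-p_j)+(\Delta(G)-1),
\]
so $W(G)\leq(k+1)\Delta(G)-\sum_{j=1}^{k}p_j$. Since every $p_j\leq d(w^{(j)})\leq\Delta(G)$ and $k\leq\mathrm{diam}_{e}(G)$, the right-hand side is bounded above by $(1+\mathrm{diam}_{e}(G))\Delta(G)-\sum_{i=1}^{\mathrm{diam}_{e}(G)}p_i$, which is the claimed inequality.

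The main obstacle I anticipate is the one-step lemma, specifically the interval packing argument that delivers the sharp constant $\Delta(G)-p_j$ at each level, and handling the degenerate case $f=ww'_i$ cleanly so that the pigeonhole still applies. A secondary bookkeeping point is verifying that the iteration actually terminates at a vertex $w^{(0)}$ equal to $u$ (which follows from $p_1=1$ under the $\mathcal{D}$-structure), so that the boundary estimate $\alpha(f_0)\leq\Delta(G)$ can be applied to close the telescoping sum.
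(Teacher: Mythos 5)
Your proof is correct and follows essentially the same route as the paper: your one-step lemma (the $p_j$ neighbors guaranteed by the $\mathcal{D}$-property all land inside the interval $S(w,\alpha)$ of at most $\Delta(G)$ consecutive colors, forcing one of their colors within $\Delta(G)-p_j$ of $\alpha(f)$) is exactly the paper's key inequality $\mathrm{sp}_{\alpha,k}\leq \mathrm{sp}_{\alpha,k-1}+\Delta(G)-p_k$. The only difference is organizational: the paper telescopes via the global span quantity $\mathrm{sp}_{\alpha,k}$ taken over all edge pairs at distance $k$ (with base case $\mathrm{sp}_{\alpha,0}=\Delta(G)-1$), whereas you telescope along a single explicit chain from the edge colored $W(G)$ down to the vertex $u$ with $S(u,\alpha)=[1,d_G(u)]$; the two bookkeepings yield the same bound.
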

\begin{proof}
Let $\alpha$ be an interval $W(G)$-coloring of $G$. First we
show that if $1 \leq k \leq \mathrm{diam}_{e}(G)$, then $\mathrm{sp}_{\alpha,k}\leq
\mathrm{sp}_{\alpha,k-1}+\Delta(G)-p_{k}$.

Let $e,e^{\prime}\in E(G)$ be any two edges of $G$ with
$d(e,e^{\prime})=k$. Without loss of generality, we may assume that
$\alpha(e)\geq \alpha(e^{\prime})$. Since $d(e,e^{\prime})=k$, there
exist $u$ and $v$ vertices such that $u\in e$ and $v\in e^{\prime}$
and $d(u,v)=k$. There are $v_{1},v_{2},\ldots,v_{p_{k}}$
($v_{i}\neq v_{j}$ when $i\neq j$)
vertices such that $d(u,v_{i})=p_{k}-1$ and $v_{i}v\in E(G)$ for
$i=1,\ldots,p_{k}$. Since $G$ is a connected graph with maximum degree $\Delta(G)$, we have
\begin{center}
$\min_{1\leq i\leq p_{k}} \alpha(v_{i}v)\leq \alpha(e^{\prime})+(d_{G}(v)-p_{k}-1)+1\leq \alpha(e^{\prime})+\Delta(G)-p_{k}.$~(*)
\end{center}

Let $\alpha(e^{\prime\prime})=\min_{1\leq i\leq p_{k}} \alpha(v_{i}v)$.
By (*), we obtain
\begin{center}
$\alpha(e^{\prime})\geq \alpha(e^{\prime\prime})-(\Delta(G)-p_{k})$ and
$d(e,e^{\prime\prime})=k-1$.
\end{center}

Thus,
\begin{center}
$\mathrm{sp}_{\alpha}\left(e,e^{\prime}\right)=\left\vert
\alpha(e)-\alpha(e^{\prime})\right\vert \leq \left\vert
\alpha(e)-\alpha(e^{\prime\prime})+\Delta(G)-p_{k}\right\vert\leq \left\vert \alpha(e)-\alpha(e^{\prime\prime})\right\vert+\Delta(G)-p_{k}\leq \mathrm{sp}_{\alpha,k-1}+\Delta(G)-p_{k}$.
\end{center}

Since $e$ and $e^{\prime}$ were arbitrary edges with
$d(e,e^{\prime})=k$, we obtain $\mathrm{sp}_{\alpha,k}\leq
\mathrm{sp}_{\alpha,k-1}+\Delta(G)-p_{k}$. Now by induction on $k$ with
$\mathrm{sp}_{\alpha,0}=\Delta(G)-1$, we obtain

$$\mathrm{sp}_{\alpha,\mathrm{diam}_{e}(G)}\leq \left(1+\mathrm{diam}_{e}(G)\right)\Delta(G)-\sum_{i=1}^{\mathrm{diam}_{e}(G)}p_{i}-1.$$

From this and taking into account that $d(e,e^{\prime})\leq \mathrm{diam}_{e}(G)$ for all
$e,e^{\prime}\in E(G)$, we get 

$$W(G)\leq \left(1+\mathrm{diam}_{e}(G)\right)\Delta(G)-\sum_{i=1}^{\mathrm{diam}_{e}(G)}p_{i}.$$
\end{proof}

Let us consider connected bipartite graphs. If $G$ is a connected bipartite graph and $G\in \mathfrak{N}$, then let $uv$ and $u'v'$ be edges colored by colors $1$ and $W(G)$, respectively, in an interval $W(G)$-coloring of $G$. Since the shortest path $P$ between the vertex sets $\{u, v\}$ and $\{u',v'\}$ in $G$ has length at most $\mathrm{diam}(G)-1$, by Theorem \ref{newupperbound}, we obtain the following result.

\begin{corollary}
\label{newupperboundbip} If $G$ is a connected bipartite graph and $G\in \mathcal{D}(p_{1},\ldots,p_{\mathrm{diam}(G)-1})$, then 
$$W\left(G\right)\leq \mathrm{diam}(G)\Delta(G)-\sum_{i=1}^{\mathrm{diam}(G)-1}p_{i}.$$
\end{corollary}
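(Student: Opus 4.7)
The plan is to reuse the recurrence established inside the proof of Theorem \ref{newupperbound}, but to apply it only as far as the edge distance between the two extreme-colored edges actually goes. The bipartite structure forces that distance to be at most $\mathrm{diam}(G)-1$, and a short padding argument then upgrades the bound to one indexed by $\mathrm{diam}(G)-1$.

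The heart of the argument is the observation suggested in the sentence preceding the corollary: for any two edges $e=uv$ and $e'=u'v'$ of a connected bipartite graph $G$, we have $d(e,e')\leq \mathrm{diam}(G)-1$. I would prove this by a parity count. Since $u,v$ lie in different parts of the bipartition and similarly $u',v'$ do, exactly two of the four endpoint distances $d(u,u'), d(u,v'), d(v,u'), d(v,v')$ have one parity and the other two have the opposite parity. Since $\mathrm{diam}(G)$ has a single parity, at least two of these four distances carry the opposite parity and are therefore strictly less than $\mathrm{diam}(G)$; hence $d(e,e')=\min\{d(u,u'), d(u,v'), d(v,u'), d(v,v')\}\leq \mathrm{diam}(G)-1$.

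Then I would fix an interval $W(G)$-coloring $\alpha$ of $G$ and pick edges $e_{1},e_{2}$ with $\alpha(e_{1})=1$ and $\alpha(e_{2})=W(G)$. Setting $k=d(e_{1},e_{2})$, the observation gives $k\leq \mathrm{diam}(G)-1$. Running the recurrence $\mathrm{sp}_{\alpha,j}\leq \mathrm{sp}_{\alpha,j-1}+\Delta(G)-p_{j}$ from the proof of Theorem \ref{newupperbound} up to distance $k$, starting from $\mathrm{sp}_{\alpha,0}=\Delta(G)-1$, yields
\begin{equation*}
W(G)-1 \;=\; \mathrm{sp}_{\alpha}(e_{1},e_{2}) \;\leq\; \mathrm{sp}_{\alpha,k} \;\leq\; (1+k)\Delta(G)-\sum_{i=1}^{k}p_{i}-1.
\end{equation*}
To close the gap between $k$ and the stated index $\mathrm{diam}(G)-1$, I would use $p_{i}\leq \Delta(G)$, which holds because the $p_{i}$ distinct vertices in the definition of $\mathcal{D}$ are all neighbors of the single vertex $v$, so $p_{i}\leq d_{G}(v)\leq \Delta(G)$. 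Since each $\Delta(G)-p_{i}$ is non-negative, padding the sum gives
\begin{equation*}
W(G) \;\leq\; (1+k)\Delta(G)-\sum_{i=1}^{k}p_{i}+\sum_{i=k+1}^{\mathrm{diam}(G)-1}\bigl(\Delta(G)-p_{i}\bigr) \;=\; \mathrm{diam}(G)\Delta(G)-\sum_{i=1}^{\mathrm{diam}(G)-1}p_{i},
\end{equation*}
which is the desired bound. The main obstacle is the parity observation for bipartite graphs; the rest is an appeal to Theorem \ref{newupperbound} together with the routine non-negativity argument.
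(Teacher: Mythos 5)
Your proposal is correct and follows essentially the same route as the paper: the paper likewise observes that in a connected bipartite graph the two edges colored $1$ and $W(G)$ are at edge-distance at most $\mathrm{diam}(G)-1$ (your parity count is just a fleshed-out version of this observation) and then invokes the recurrence from Theorem \ref{newupperbound}. Your explicit padding step using $p_{i}\leq\Delta(G)$ is a detail the paper leaves implicit, but it is the same argument.
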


Since the $k$-dimensional torus $T(2n_{1},2n_{2},\ldots,2n_{k})$ ($n_{1}\geq n_{2}\geq \cdots\geq n_{k}\geq 2$) is a connected $2k$-regular bipartite graph, $T(2n_{1},2n_{2},\ldots,2n_{k})\in \mathfrak{N}$ \cite{Petrosyanproducts} and $T(2n_{1},2n_{2},\ldots,2n_{k})\in \mathcal{D}(\underbrace{1,1,\ldots,1}_{n_{1}},\underbrace{2,2,\ldots,2}_{n_{2}},\ldots,\underbrace{k,k,\ldots,k}_{n_{k}-1})$ with $\mathrm{diam}(T(2n_{1},2n_{2},\ldots,2n_{k}))=\sum_{i=1}^{k}n_{i}$, we obtain the following upper bound on $W(T(2n_{1},2n_{2},\ldots,2n_{k}))$.

\begin{corollary}
\label{newupperboundtori}
If $n_{1},n_{2},\ldots,n_{k}\in\mathbb{N}$ and $ n_{1}\geq n_{2}\geq \cdots\geq n_{k}\geq 2$, then 
$$W(T(2n_{1},2n_{2},\ldots,2n_{k}))\leq k+\sum_{i=1}^{k}n_{i}(2k-i).$$
\end{corollary}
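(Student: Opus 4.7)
The plan is to apply Corollary \ref{newupperboundbip} directly to $T := T(2n_1, \ldots, 2n_k)$, using the properties of $T$ asserted in the paragraph preceding the corollary. Specifically, I need to verify that $T$ is a connected bipartite graph, $T \in \mathfrak{N}$, $\Delta(T) = 2k$, $\mathrm{diam}(T) = \sum_{i=1}^k n_i$, and $T$ lies in the $\mathcal{D}$-class with the listed parameters; the bound then follows from a short computation.

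The first few hypotheses are routine: each $C_{2n_i}$ is a $2$-regular connected bipartite graph, so $T$ is a connected $2k$-regular bipartite graph with $\Delta(T) = 2k$ and $\mathrm{diam}(T) = \sum_{i=1}^k \mathrm{diam}(C_{2n_i}) = \sum_{i=1}^k n_i$, while $T \in \mathfrak{N}$ is the result from \cite{Petrosyanproducts}. The main step is the $\mathcal{D}$-membership. I would identify $V(T) = \prod_{i=1}^k \mathbb{Z}_{2n_i}$ so that distances decompose coordinatewise as $d(u,v) = \sum_{i=1}^k d_i$, where $d_i := d_{C_{2n_i}}(u_i, v_i) \leq n_i$. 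Given $u, v$ with $d(u,v) = j$ lying in the $l$-th block (so $\sum_{i<l} n_i < j \leq \sum_{i \leq l} n_i$), I claim that at least $l$ of the $d_i$ are strictly positive. Indeed, if only $l - 1$ of them were positive, then, since $n_1 \geq \cdots \geq n_k$, their sum would be at most $\sum_{i=1}^{l-1} n_i = \sum_{i<l} n_i$, contradicting $j > \sum_{i<l} n_i$. For each coordinate $i$ with $d_i > 0$, moving $v_i$ one step toward $u_i$ along a shortest $C_{2n_i}$-path produces a vertex $v^{(i)}$ adjacent to $v$ with $d(u, v^{(i)}) = j - 1$; the vertices obtained from distinct coordinates are themselves distinct, yielding the required $l$ neighbors of $v$ at distance $j - 1$ from $u$.

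Finally, plugging the parameters into Corollary \ref{newupperboundbip} gives
$$W(T) \leq \mathrm{diam}(T)\,\Delta(T) - \sum_{i=1}^{\mathrm{diam}(T)-1} p_i = 2k \sum_{i=1}^k n_i - \left(\sum_{j=1}^{k-1} j n_j + k(n_k - 1)\right),$$
and using $\sum_{j=1}^{k-1} j n_j + k(n_k - 1) = \sum_{j=1}^k j n_j - k$ this simplifies to $k + \sum_{i=1}^k (2k - i) n_i$. The only nontrivial step is the pigeonhole verification of the $\mathcal{D}$-property; everything else is quoted from earlier results or routine algebra.
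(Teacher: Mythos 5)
Your proposal is correct and follows essentially the same route as the paper: the paper derives the bound by asserting that $T(2n_{1},\ldots,2n_{k})\in \mathcal{D}(\underbrace{1,\ldots,1}_{n_{1}},\ldots,\underbrace{k,\ldots,k}_{n_{k}-1})$ and invoking Corollary \ref{newupperboundbip}, exactly as you do. Your pigeonhole argument via the coordinatewise decomposition $d(u,v)=\sum_i d_{C_{2n_i}}(u_i,v_i)$ correctly supplies the verification of the $\mathcal{D}$-membership that the paper leaves implicit, and your final computation matches.
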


Let us now consider interval edge-colorings of Hamming graphs. In \cite{Petrosyanproducts}, Petrosyan proved that Hamming graphs $H(n_{1},n_{2},\ldots,n_{k})\in \mathfrak{N}$ if and only if $n_{1}\cdot n_{2}\cdots n_{k}$ is even. Moreover, if $n_{1}\cdot n_{2}\cdots n_{k}$ is even, then $w(H(n_{1},n_{2},\ldots,n_{k}))=\sum_{i=1}^{k}(n_{i}-1)$. Here, by Theorem \ref{newupperbound}, we obtain the following upper bound on $W(H(2n_{1},2n_{2},\ldots,2n_{k}))$.

\begin{theorem}\label{Hammingupper}
If $n_{1},n_{2},\ldots,n_{k}\in\mathbb{N}$, then 
$$W(H(2n_{1},2n_{2},\ldots,2n_{k}))\leq \frac{1}{2}(k+1)\sum_{i=1}^{k}(4n_{i}-3).$$
\end{theorem}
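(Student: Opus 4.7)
The plan is to apply Theorem~\ref{newupperbound} to $H := H(2n_1,\ldots,2n_k)$. I identify $V(H)$ with $\prod_{i=1}^{k}[0,2n_i-1]$ so that adjacency means differing in exactly one coordinate and graph distance coincides with Hamming distance. This gives $\Delta(H) = 2\sum_{i=1}^{k} n_i - k$, $\mathrm{diam}(H) = k$, and therefore $\mathrm{diam}_e(H) \leq k$.

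The combinatorial core of the argument is to verify that $H \in \mathcal{D}(1,2,\ldots,\mathrm{diam}_e(H))$, i.e.\ that $p_{k'} = k'$ is attainable for every $1 \leq k' \leq \mathrm{diam}_e(H)$. Given $u,v \in V(H)$ with $d(u,v) = k'$, let $S \subseteq \{1,\ldots,k\}$ be the set of $k'$ coordinates on which $u$ and $v$ differ. For each $i \in S$, the vertex $v^{(i)}$ obtained from $v$ by changing only coordinate $i$ to the value $u_i$ is a neighbor of $v$ satisfying $d(u,v^{(i)}) = k'-1$, and these $k'$ neighbors of $v$ are pairwise distinct, providing the required witnesses.

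Next I pin down $\mathrm{diam}_e(H)$. When some $n_j \geq 2$, one has $\mathrm{diam}_e(H) = k$: pick two edges inside the $K_{2n_j}$-fiber that use four pairwise distinct values at coordinate $j$ (possible since $2n_j \geq 4$), with the common value at every other coordinate flipped between the two edges; then each of the four endpoint pairs differs in all $k$ coordinates. Substituting $p_i = i$ and $\mathrm{diam}_e(H) = k$ into Theorem~\ref{newupperbound} yields
\[
W(H) \leq (k+1)\left(2\sum_{i=1}^{k} n_i - k\right) - \frac{k(k+1)}{2},
\]
which, by routine algebra, simplifies to $\tfrac{1}{2}(k+1)\sum_{i=1}^{k}(4n_i-3)$.

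The only delicate point, and the main obstacle worth flagging, is the hypercube boundary case $n_1=\cdots=n_k=1$, i.e.\ $H = Q_k$, where $\mathrm{diam}_e(H) = k-1$ rather than $k$. In that case Theorem~\ref{newupperbound} instead gives $W(Q_k) \leq k\cdot k - \binom{k}{2} = \tfrac{k(k+1)}{2}$, and substituting $n_i = 1$ into the claimed formula returns the same value $\tfrac{k(k+1)}{2}$, so a single uniform bound covers both cases.
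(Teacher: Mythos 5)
Your proof is correct and follows essentially the same route as the paper: apply Theorem~\ref{newupperbound} to $H(2n_1,\ldots,2n_k)$ with $p_i=i$ and edge-diameter at most $k$, then simplify the resulting expression. The only differences are in the level of detail: you verify $H\in\mathcal{D}(1,2,\ldots,k)$ by an explicit coordinate argument where the paper cites the fact that intervals $I(u,v)$ induce hypercubes, and you explicitly dispose of the boundary case $\mathrm{diam}_e=k-1$ (the hypercube $Q_k$), which the paper leaves implicit since the bound is monotone there anyway.
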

\begin{proof}
Let us first note that $\Delta(H(2n_{1},2n_{2},\ldots,2n_{k}))=\sum_{i=1}^{k}(2n_{i}-1)$ and for any two edges $e$ and $e'$ in $H(2n_{1},2n_{2},\ldots,2n_{k})$, $d(e,e')\leq k$. Since for any two vertices $u$ and $v$ at distance $r$ ($1\leq r\leq k$) in $H(2n_{1},2n_{2},\ldots,2n_{k})$, the interval $I(u,v)$ induces a hypercube of dimension $r$ in $H(2n_{1},2n_{2},\ldots,2n_{k})$ \cite{HammackImrichKlavzar}, we have that $H(2n_{1},2n_{2},\ldots,2n_{k})\in \mathcal{D}(1,2,\ldots,k)$. Now, by Theorem \ref{newupperbound}, we obtain
$$W(H(2n_{1},2n_{2},\ldots,2n_{k}))\leq (k+1)\sum_{i=1}^{k}(2n_{i}-1)-\frac{k(k+1)}{2}=(k+1)\left(\sum_{i=1}^{k}(2n_{i}-1)-\frac{k}{2}\right),$$

thus,

$$W(H(2n_{1},2n_{2},\ldots,2n_{k}))\leq \frac{1}{2}(k+1)\sum_{i=1}^{k}(4n_{i}-3).$$
\end{proof}

\begin{corollary}\label{Hamming}
If $n,k\in\mathbb{N}$, then 
$$W(H_{2n}^{k})\leq \frac{1}{2}k(k+1)(4n-3).$$
\end{corollary}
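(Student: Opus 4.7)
The plan is very short: the corollary is the specialization of Theorem \ref{Hammingupper} to the case where all factors are equal. Recall that $H_{2n}^{k}$ is by definition the Cartesian product of $k$ copies of $K_{2n}$, which is exactly $H(2n_{1},2n_{2},\ldots,2n_{k})$ with $n_{1}=n_{2}=\cdots =n_{k}=n$, so Theorem \ref{Hammingupper} applies directly.

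First, I would simply write
\[
W(H_{2n}^{k}) = W\bigl(H(\underbrace{2n,2n,\ldots,2n}_{k})\bigr) \leq \frac{1}{2}(k+1)\sum_{i=1}^{k}(4n_{i}-3),
\]
using the inequality provided by Theorem \ref{Hammingupper}. Then, since each $n_{i}=n$, the sum inside collapses to $k(4n-3)$, which yields
\[
W(H_{2n}^{k}) \leq \frac{1}{2}(k+1)\cdot k(4n-3) = \frac{1}{2}k(k+1)(4n-3),
\]
as claimed. No further combinatorial argument is needed.

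There is essentially no obstacle here; the only thing to double-check is that the identification $H_{2n}^{k}=H(2n,2n,\ldots,2n)$ is the one used in the preceding section (this is stated explicitly in the notation paragraph, where the graph $H_{n}^{k}$ is defined as $K_{n}\square K_{n}\square\cdots\square K_{n}$, $k$ times). Hence the proof reduces to the single substitution above.
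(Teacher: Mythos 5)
Your proposal is correct and is exactly how the paper obtains this corollary: it is the direct specialization of Theorem \ref{Hammingupper} to $n_{1}=\cdots=n_{k}=n$, using the identification $H_{2n}^{k}=H(2n,\ldots,2n)$ from the notation section. Nothing further is needed.
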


Since hypercubes $Q_{n}$ are isomorphic to Hamming graphs $H_{2}^{n}$, we
derive the following result.

\begin{corollary}\label{hypercubes}
If $n\in\mathbb{N}$, then $W\left(Q_{n}\right)
\leq \frac{n\left( n+1\right)}{2}$.
\end{corollary}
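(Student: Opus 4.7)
The plan is to deduce this directly from Corollary \ref{Hamming} by specializing the parameters so that the Hamming graph becomes a hypercube. Recall from the paper's notation that $H_{2n}^{k}$ denotes the Cartesian product of $K_{2n}$ with itself $k$ times, and that $Q_{n}$ is isomorphic to $H_{2}^{n}$, i.e.\ the Cartesian product of $n$ copies of $K_{2}$.

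First, I would identify the correct substitution: to realize $Q_{n}$ as $H_{2m}^{k}$ one must take $m=1$ (so that $K_{2m}=K_{2}$) and $k=n$ (so that we have $n$ factors). This gives the isomorphism $Q_{n}\cong H_{2}^{n}$, which is already recorded in Section~2 of the paper.

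Next, I would plug $m=1$ and $k=n$ into the bound from Corollary \ref{Hamming}, namely $W(H_{2m}^{k})\leq \tfrac{1}{2}k(k+1)(4m-3)$. With $4m-3=1$, the right-hand side collapses to
\begin{equation*}
\tfrac{1}{2}\,n(n+1)(4\cdot 1-3)=\frac{n(n+1)}{2},
\end{equation*}
which yields $W(Q_{n})\leq \frac{n(n+1)}{2}$, as desired.

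There is really no obstacle here: the corollary is a direct specialization and no further argument is required. The only thing worth noting is that this upper bound is known to be tight, since Petrosyan, Khachatrian, and Tananyan \cite{PetrosyanKhachatrianTananyan} established the matching lower bound $W(Q_{n})\geq \frac{n(n+1)}{2}$, so the inequality obtained here actually gives the exact value $W(Q_{n})=\frac{n(n+1)}{2}$.
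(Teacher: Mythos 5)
Your proof is correct and follows exactly the paper's route: specializing Corollary \ref{Hamming} (equivalently Theorem \ref{Hammingupper}) to $K_{2}$ factors via the isomorphism $Q_{n}\cong H_{2}^{n}$, so that $4\cdot 1-3=1$ and the bound collapses to $\frac{n(n+1)}{2}$. The closing remark about tightness matches the discussion elsewhere in the paper and is a harmless addition.
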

\bigskip

\section{Interval edge-colorings of $k$-dimensional tori and Hamming graphs}

In this section we begin our considerations with interval edge-colorings of $k$-dimensional tori. In \cite{Petrosyanproducts}, Petrosyan proved that $k$-dimensional tori $T(n_{1},\ldots,n_{k})\in \mathfrak{N}$ ($n_{i}\in
\mathbb{N},n_{i}\geq 3$) if and only if $n_{1}\cdot n_{2}\cdots n_{k}$ is even. Moreover, if $n_{1}\cdot n_{2}\cdots n_{k}$ is even, then $w(T(n_{1},\ldots,n_{k}))=2k$. Here, using Corollary \ref{manygraphs}, we are able to derive lower bounds on $W(T(n_{1},n_{2},\ldots,n_{k}))$.

\begin{theorem}\label{k-Tori}\
\begin{description}
    \item[1)] If $n_{1},n_{2},\ldots,n_{k}\in\mathbb{N}$ and $2\leq n_{1}\leq n_{2}\leq \cdots\leq n_{k}$, then 
$$W(T(2n_{1},2n_{2},\ldots,2n_{k}))\geq k+\sum_{i=1}^{k}n_{i}(2i-1).$$
    \item[2)] If $m_{1},m_{2},\ldots,m_{k},n_{1},n_{2},\ldots,n_{k+s}\in\mathbb{N}$ ($s\geq 0$) and $1\leq m_{1}\leq m_{2}\leq \cdots\leq m_{k}$, $2\leq n_{1}\leq n_{2}\leq \cdots\leq n_{k+s}$, then 
$$W(T(2n_{1},\ldots,2n_{k+s},2m_{1}+1,\ldots,2m_{k}+1))\geq s+2k^{2}+2\sum_{i=1}^{k}(m_{i}+n_{i})+\sum_{j=1}^{s}n_{k+j}(4k+2j-1).$$
\end{description} 
\end{theorem}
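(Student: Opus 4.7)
The approach is to prove both parts by iteratively attaching cycle factors to an interval colorable regular base graph via Corollary \ref{manygraphs}(a), which contributes a factor $n(r+1)+1$ each time we attach $C_{2n}$ to an interval colorable $r$-regular graph.

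For Part 1, I would set $H_1 = C_{2n_1}$ (so $W(H_1)=n_1+1$ and regularity $2$) and, for $j=2,\ldots,k$, define $H_j = H_{j-1}\square C_{2n_j}$. Since each $H_{j-1}$ is a product of even cycles---hence $2(j-1)$-regular and interval colorable by Theorem \ref{GiaroKubale}---Corollary \ref{manygraphs}(a) gives
\[ W(H_j) \geq n_j(2j-1) + W(H_{j-1}) + 1. \]
Telescoping yields $W(H_k) \geq k + \sum_{i=1}^{k} n_i(2i-1)$, which is the claimed bound. In this particular iteration $n_i$ receives coefficient $2i-1$, so the monotonicity $n_1 \leq \cdots \leq n_k$ is exactly what makes the stated formula the bound produced.

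For Part 2, the obstacle is that odd cycles lie outside $\mathfrak{N}$, so they cannot play the role of $G$ or $H$ in the corollaries. I would circumvent this by pairing each odd cycle with a small-indexed even cycle: let $P_j = C_{2n_j}\square C_{2m_j+1}$ for $j=1,\ldots,k$. By Theorem \ref{Petrosyan} each $P_j$ is interval colorable and $4$-regular, and Theorem \ref{2-Tori} (applicable because $n_j\geq 2$) gives $W(P_j)\geq 2(n_j+m_j)+2$. Applying Corollary \ref{manygraph} to the product $P=P_1\square\cdots\square P_k$ of $k$ equi-regular factors gives
\[ W(P) \geq \sum_{i=1}^{k}\bigl[2(n_i+m_i)+2\bigr] + \sum_{i=1}^{k-1}4i = 2\sum_{i=1}^{k}(n_i+m_i) + 2k^2, \]
and $P$ is $4k$-regular and interval colorable.

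Finally, I would attach the remaining even cycles $C_{2n_{k+1}},\ldots,C_{2n_{k+s}}$ one at a time in increasing order, setting $H_0=P$ and $H_j=H_{j-1}\square C_{2n_{k+j}}$. Since $H_{j-1}$ is interval colorable with regularity $4k+2(j-1)$, Corollary \ref{manygraphs}(a) gives
\[ W(H_j) \geq n_{k+j}(4k+2j-1) + W(H_{j-1}) + 1, \]
and telescoping with the bound on $W(P)$ produces $s + 2k^2 + 2\sum_{i=1}^{k}(m_i+n_i) + \sum_{j=1}^{s}n_{k+j}(4k+2j-1)$, as desired. The main obstacle is the pairing step: without it, the odd cycles contribute nothing usable, whereas pairing converts them into $4$-regular interval colorable factors whose quadratic stacking $\sum 4i = 2k(k-1)$ combined with the $+2$ per pair furnishes precisely the $2k^2$ term that distinguishes Part 2 from a naive even-only bound.
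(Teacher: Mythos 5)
Your proposal is correct and takes essentially the same route as the paper: Part 1 is the paper's induction on $k$ written as a telescoping application of Corollary \ref{manygraphs}(a), and Part 2 uses the identical decomposition into $4$-regular factors $C_{2n_j}\square C_{2m_j+1}$ bounded via Theorem \ref{2-Tori} and stacked by Corollary \ref{manygraph}, followed by attaching the remaining even cycles one at a time.
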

\begin{proof}
We prove 1) of Theorem \ref{k-Tori} by induction on $k$. For the case $k=1$, the statement is trivial since $W(C_{2n_{1}})=n_{1}+1$. So, assume that the lower bound on $W(T(2n_{1},2n_{2},\ldots,2n_{k-1}))$ holds.

Clearly, $T(2n_{1},2n_{2},\ldots,2n_{k-1})$ is a $(2k-2)$-regular graph and $T(2n_{1},2n_{2},\ldots,2n_{k})=C_{2n_{k}}\square T(2n_{1},2n_{2},\ldots,2n_{k-1})$. By the induction hypothesis and using Corollary \ref{manygraphs}, we have

$$W(T(2n_{1},2n_{2},\ldots,2n_{k}))\geq n_{k}+1+\left(k-1+\sum_{i=1}^{k-1}n_{i}(2i-1)\right)+n_{k}(2k-2)=k+\sum_{i=1}^{k}n_{i}(2i-1).$$

Let us now prove 2) of Theorem \ref{k-Tori}. Since the Cartesian product of graphs is commutative and associative, we have

$$T(2n_{1},\ldots,2n_{k+s},2m_{1}+1,\ldots,2m_{k}+1)\cong C_{2n_{k+s}}\square C_{2n_{k+s-1}}\square \cdots \square C_{2n_{k+1}}\square H,$$

where $H=T_{1}\square T_{2}\square \cdots \square T_{k}$ and $T_{i}=C_{2n_{i}}\square C_{2m_{i}+1}$ ($1\leq i\leq k$). By Theorem \ref{2-Tori}, we obtain $W(T_{i})\geq 2m_{i}+2n_{i}+2$ ($1\leq i\leq k$).
Since $\Delta(T_{i})=4$ ($1\leq i\leq k$), by Corollary \ref{manygraph}, we have
$$W(H)=W(T_{1}\square T_{2}\square \cdots \square T_{k})\geq 2k+\sum_{i=1}^{k}(2m_{i}+2n_{i})+2k(k-1)=2k^{2}+2\sum_{i=1}^{k}(m_{i}+n_{i}).$$
Let $G_{s}=T(2n_{1},\ldots,2n_{k+s},2m_{1}+1,\ldots,2m_{k}+1)$. We prove the lower bound on $W(G_{s})$ by induction on $s$. For the case $s=0$, we have
$$W(G_{0})=W(H)\geq 2k^{2}+2\sum_{i=1}^{k}(m_{i}+n_{i}).$$
So, assume that the lower bound on $W(G_{s-1})$ holds. Clearly, $G_{s-1}$ is a $(4k+2s-2)$-regular graph and $G_{s}=C_{2n_{k+s}}\square G_{s-1}$. By the induction hypothesis and using Corollary \ref{manygraphs}, we obtain

\begin{eqnarray*}
W(G_{s}) &\geq & n_{k+s}+1+\left(s-1+2k^{2}+2\sum_{i=1}^{k}(m_{i}+n_{i})+\sum_{j=1}^{s-1}n_{k+j}(4k+2j-1)\right)+n_{k+s}(4k+2s-2)\\ 
&=& s+2k^{2}+2\sum_{i=1}^{k}(m_{i}+n_{i})+\sum_{j=1}^{s}n_{k+j}(4k+2j-1).\\
\end{eqnarray*}
\end{proof}

\begin{corollary}\label{equitori}
For any $n\in\mathbb{N}$ and $n\geq 2$, we have 
$$W(T(\underbrace{2n,2n,\ldots,2n}_{k}))\geq nk^{2}+k.$$
\end{corollary}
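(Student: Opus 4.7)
The plan is to obtain this as a direct specialization of part 1) of Theorem \ref{k-Tori}. Setting $n_1 = n_2 = \cdots = n_k = n$ (which satisfies the hypothesis $2 \leq n_1 \leq n_2 \leq \cdots \leq n_k$ since $n \geq 2$), the theorem yields
$$W(T(\underbrace{2n,2n,\ldots,2n}_{k})) \geq k + \sum_{i=1}^{k} n(2i-1) = k + n\sum_{i=1}^{k}(2i-1).$$

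The only computation needed is the well-known identity $\sum_{i=1}^{k}(2i-1) = k^2$ (the sum of the first $k$ odd positive integers). Substituting this gives $k + nk^2 = nk^2 + k$, which is the claimed lower bound.

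There is no real obstacle here: the work has already been done in Theorem \ref{k-Tori}, whose proof proceeded by induction on $k$ using Corollary \ref{manygraphs} with $C_{2n}$ as the separable factor (contributing $n+1$ to the coloring, together with the term $n(2k-2)$ coming from the regularity of the $(2k-2)$-regular partial product). The current statement simply packages the uniform case in a clean closed form.
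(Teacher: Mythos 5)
Your derivation is correct and matches the paper's intent exactly: the corollary is stated without proof precisely because it is the immediate specialization $n_1=\cdots=n_k=n$ of Theorem \ref{k-Tori}, part 1), combined with the identity $\sum_{i=1}^{k}(2i-1)=k^{2}$. Nothing further is needed.
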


Since for $n=2$ the graph $T(\underbrace{4,4,\ldots,4}_{k})$ is isomorphic to $Q_{2k}$, by Corollary \ref{hypercubes}, we obtain that $W(T(\underbrace{4,4,\ldots,4}_{k}))=W(Q_{2k})=2k^{2}+k$. This shows that the lower bound in Theorem \ref{k-Tori} is sharp.   

Finally, we consider interval edge-colorings of Hamming graphs. As we mentioned before, Hamming graphs $H(n_{1},n_{2},\ldots,n_{k})\in \mathfrak{N}$ if and only if $n_{1}\cdot n_{2}\cdots n_{k}$ is even, and if $n_{1}\cdot n_{2}\cdots n_{k}$ is even, then $w(H(n_{1},n_{2},\ldots,n_{k}))=\sum_{i=1}^{k}(n_{i}-1)$. Here, by Corollary \ref{manygraphs}, we obtain the following lower bound on $W(H(2n_{1},2n_{2},\ldots,2n_{k}))$.

\begin{theorem}\label{Hamminglower}
If $n_{1},n_{2},\ldots,n_{k}\in\mathbb{N}$, then 
$$W(H(2n_{1},2n_{2},\ldots,2n_{k}))\geq \sum_{i=1}^{k}W(K_{2n_{i}})+\sum_{i=1}^{k-1}i(2n_{i}-1).$$
\end{theorem}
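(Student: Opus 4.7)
The plan is to deduce the inequality directly from Corollary~\ref{manygraph}. Observe that $H(2n_1, 2n_2, \ldots, 2n_k) = K_{2n_1} \square K_{2n_2} \square \cdots \square K_{2n_k}$, and each factor $K_{2n_i}$ is a $(2n_i-1)$-regular graph. Since $2n_i$ is even, the result of Petrosyan cited earlier in the paper guarantees $K_{2n_i}\in\mathfrak{N}$. By the commutativity and associativity of the Cartesian product we may reorder the factors without changing the left-hand side, so we assume without loss of generality that $2n_1 - 1 \geq 2n_2 - 1 \geq \cdots \geq 2n_k - 1$; this is precisely the hypothesis of Corollary~\ref{manygraph}.

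Corollary~\ref{manygraph}, applied with $G_i = K_{2n_i}$ and $r_i = 2n_i - 1$, then yields
$$
W\bigl(H(2n_1, 2n_2, \ldots, 2n_k)\bigr) \;\geq\; \sum_{i=1}^{k} W(K_{2n_i}) + \sum_{i=1}^{k-1} \sum_{j=1}^{i} (2n_j - 1),
$$
and interchanging the order of summation recasts the double sum as $\sum_{j=1}^{k-1}(k-j)(2n_j - 1)$. It remains to compare this quantity with the theorem's sum $\sum_{i=1}^{k-1} i(2n_i-1)$.

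Under the chosen ordering, both of the sequences $(k-1, k-2, \ldots, 1, 0)$ and $(2n_1-1, 2n_2-1, \ldots, 2n_k-1)$ are non-increasing, so Corollary~\ref{manygraph}'s sum realizes the same-order pairing of these coefficients and values, which by the rearrangement inequality is the maximum over all pairings of the coefficient multiset $\{0, 1, \ldots, k-1\}$ with the value multiset $\{2n_1 - 1, \ldots, 2n_k - 1\}$. The theorem's sum is another pairing of the same two multisets, hence no larger than what Corollary~\ref{manygraph} delivers. Combining these two observations produces the stated inequality. The whole argument is essentially a single invocation of Corollary~\ref{manygraph} followed by an elementary rearrangement comparison; the only substantive step is the reduction to Corollary~\ref{manygraph}, and I do not expect any serious obstacle.
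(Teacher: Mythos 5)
Your proof is correct and follows essentially the same route as the paper, which obtains this bound by a direct application of Corollary~\ref{manygraph} to the factors $K_{2n_i}$ (each interval colorable and $(2n_i-1)$-regular). Your added rearrangement-inequality step, comparing the sorted pairing produced by Corollary~\ref{manygraph} with the pairing $\sum_{i=1}^{k-1} i(2n_i-1)$ appearing in the statement, is a worthwhile detail that the paper leaves implicit, since the theorem imposes no ordering on the $n_i$.
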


In \cite{Petrosyanproducts}, Petrosyan proved that if $n=p2^{q}$, where $p$ is odd and $q$ is non-negative integer, then $W(H_{2n}^k) \geq (4n - 2 - p - q)k$. We improve this lower bound on $W(H_{2n}^k)$ for Hamming graphs $H_{2n}^k$. From Theorems \ref{Completegraph} and \ref{Hamminglower}, we have the following result.

\begin{corollary}\label{equiHamming}
If $n=\Pi_{i=1}^{\infty}p_{i}^{\alpha_{i}}$, where $p_{i}$ is the $i$th prime number and $\alpha_{i}\in \mathbb{Z}_{\geq 0}$, then
$$W(H_{2n}^k) \geq (4n - 3 - A_n)k + \frac{1}{2}k(k-1)(2n-1),$$

where $A_n = \alpha_1 + 2\alpha_2 + 3\alpha_3 + 4\alpha_4 + 4\alpha_5 + \frac{1}{2}\sum\limits_{i=6}^{\infty}{\alpha_i(p_i+1)}$.
\end{corollary}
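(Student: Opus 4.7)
The plan is to obtain this directly by combining the two cited results, specializing Theorem \ref{Hamminglower} to the case where every factor is $K_{2n}$ and then plugging in the complete-graph bound from Theorem \ref{Completegraph}.

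First I would observe that $H_{2n}^{k}$ is by definition the Hamming graph $H(2n,2n,\ldots,2n)$ with $k$ identical factors. Applying Theorem \ref{Hamminglower} with $n_{1}=n_{2}=\cdots=n_{k}=n$ gives
\[
W(H_{2n}^{k}) \;\geq\; \sum_{i=1}^{k} W(K_{2n}) + \sum_{i=1}^{k-1} i(2n-1)
\;=\; k\,W(K_{2n}) + (2n-1)\sum_{i=1}^{k-1} i,
\]
and the last sum collapses to $\tfrac{1}{2}k(k-1)$.

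Next I would invoke Theorem \ref{Completegraph}, whose right-hand side is precisely $4n-3-A_{n}$ for the quantity $A_{n}$ appearing in the statement; that is, $W(K_{2n})\geq 4n-3-A_{n}$. Substituting this into the previous inequality gives
\[
W(H_{2n}^{k}) \;\geq\; k(4n-3-A_{n}) + \tfrac{1}{2}k(k-1)(2n-1),
\]
which is exactly the claimed bound.

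There is no real obstacle here: the result is a pure substitution, and the only things to check are that the regular-graph hypotheses of Theorem \ref{Hamminglower} (which in turn rests on Corollary \ref{manygraph} applied to the $r_{i}$-regular graphs $K_{2n_{i}}$) are met with $r_{i}=2n-1$ for each factor $K_{2n}$, and that the telescoping of the regularities $r_{1}\geq r_{2}\geq\cdots\geq r_{k}$ is consistent with the identical values $r_{i}=2n-1$. Both are immediate, so the combination of Theorems \ref{Completegraph} and \ref{Hamminglower} yields the stated lower bound.
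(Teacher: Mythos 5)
Your proof is correct and follows exactly the route the paper intends: the corollary is stated as an immediate consequence of Theorems \ref{Completegraph} and \ref{Hamminglower}, obtained by setting all $n_i=n$ in Theorem \ref{Hamminglower} so that $\sum_{i=1}^{k-1} i(2n-1)=\tfrac{1}{2}k(k-1)(2n-1)$ and then substituting $W(K_{2n})\geq 4n-3-A_n$. Nothing further is needed.
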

\bigskip
	
\section{Interval edge-colorings of Fibonacci cubes}
	
In this section we consider interval edge-colorings of Fibonacci cubes $\Gamma_{n}$. Recall that $\Gamma_{n}$ is the isometric subgraph of the hypercube induced by the binary strings that contain no two consecutive 1s. Formally, it can be define as follows: let $B=\{0,1\}$ and for $n\in \mathbb{N}$, set $\mathcal{B}^{n}=\{b_{1}b_{2}\ldots b_{n}: b_{i}\in B, 1\leq i\leq n\}$. 

For $n\in \mathbb{N}$, let
$$\mathcal{F}_{n}=\{b_{1}b_{2}\ldots b_{n}\in \mathcal{B}^{n}: b_{i}\cdot b_{i+1}=0, 1\leq i\leq n-1\}.$$

The Fibonacci cube $\Gamma_{n}$ ($n\in \mathbb{N}$) has $\mathcal{F}_{n}$ as the vertex set and two vertices are adjacent if they differ in exactly one coordinate. We also need a recursive structure of Fibonacci cubes. For that, we decompose $\mathcal{F}_{n}$ into two sets $A_{n}$ and $B_{n}$ as follows:
$$A_{n}=\{b_{1}b_{2}\ldots b_{n}\in \mathcal{F}^{n}: b_{1}=1\}\text{~and~}B_{n}=\{b_{1}b_{2}\ldots b_{n}\in \mathcal{F}^{n}: b_{1}=0\}.$$

Clearly, for any $n\geq 2$, $A_{n}$ and $B_{n}$ can be recursively defined as follows: $A_{n}=\{1\alpha: \alpha\in B_{n-1}\}$ and $B_{n}=\{0\alpha: \alpha\in A_{n-1}\cup B_{n-1}\}$. This decomposition of $\mathcal{F}_{n}$ implies that the set $A_{n}$ induces a subgraph of $\Gamma_{n}$ isomorphic
to $\Gamma_{n-2}$ and the set $B_{n}$ induces $\Gamma_{n-1}$ in $\Gamma_{n}$. Moreover, each vertex $1\alpha$ of $A_{n}$ has exactly one neighbor $0\alpha$ in $B_{n}$ \cite{Klavžar}.

The interval edge-colorings of $\Gamma_{1},\Gamma_{2},\Gamma_{3}$ and $\Gamma_{4}$ are illustrated in Fig. 5.

\vspace{\baselineskip}

\begin{tikzpicture}

\node[draw, circle, fill=black, inner sep=1.5pt] (v1) at (8,2) {};
\node[draw, circle, fill=black, inner sep=1.5pt] (v2) at (10,2) {};
\draw (v1) -- node[above] {1} (v2);
\node at (9, 1) {$\Gamma_1$};
\end{tikzpicture}

\vspace{\baselineskip}

\begin{tikzpicture}

\node[draw, circle, fill=black, inner sep=1.5pt] (v1) at (8,2) {};
\node[draw, circle, fill=black, inner sep=1.5pt] (v2) at (10,2) {};
\node[draw, circle, fill=black, inner sep=1.5pt] (v3) at (12,2) {};
\draw (v1) -- node[above] {1} (v2);
\draw (v2) -- node[above] {2} (v3);

\node at (10, 1) {$\Gamma_2$};

\end{tikzpicture}

\vspace{\baselineskip}

\begin{tikzpicture}

\node[draw, circle, fill=black, inner sep=1.5pt] (v1) at (8,2) {};
\node[draw, circle, fill=black, inner sep=1.5pt] (v2) at (10,2) {};
\draw (v1) -- node[above] {2} (v2);
\node[draw, circle, fill=black, inner sep=1.5pt] (v3) at (8,0) {};
\node[draw, circle, fill=black, inner sep=1.5pt] (v4) at (10,0) {};
\node[draw, circle, fill=black, inner sep=1.5pt] (v5) at (12,0) {};
\draw (v3) -- node[above] {2} (v4) -- node[above] {3} (v5);
\draw (v1) -- node[right] {1} (v3);
\draw (v2) -- node[right] {1} (v4);
\node at (10, -0.7) {$\Gamma_3$};

\end{tikzpicture}

\vspace{\baselineskip}

\begin{tikzpicture}
\node[draw, circle, fill=black, inner sep=1.5pt] (v1) at (8,4) {};
\node[draw, circle, fill=black, inner sep=1.5pt] (v2) at (10,4) {};
\draw (v1) -- node[above] {2} (v2);
\node[draw, circle, fill=black, inner sep=1.5pt] (v3) at (8,2) {};
\node[draw, circle, fill=black, inner sep=1.5pt] (v4) at (10,2) {};
\node[draw, circle, fill=black, inner sep=1.5pt] (v5) at (12,2) {};
\draw (v3) -- node[above] {2} (v4) -- node[above] {3} (v5);
\draw (v1) -- node[right] {1} (v3);
\draw (v2) -- node[right] {1} (v4);
\node[draw, circle, fill=black, inner sep=1.5pt] (v6) at (8,0) {};
\node[draw, circle, fill=black, inner sep=1.5pt] (v7) at (10,0) {};
\node[draw, circle, fill=black, inner sep=1.5pt] (v8) at (12,0) {};
\draw (v6) -- node[above] {2} (v7) -- node[above] {3} (v8);
\draw (v3) -- node[right] {3} (v6);
\draw (v4) -- node[right] {4} (v7);
\draw (v5) -- node[right] {4} (v8);
\node at (10, -0.7) {$\Gamma_4$};

\end{tikzpicture}

\begin{center}
    \textit{Fig. 5. Interval $n$-colorings of $\Gamma_n$ for $n\leq 4$.}
\end{center}

\vspace{\baselineskip}
	
\begin{theorem}
\label{Fibonaccicubes} For any $n\in \mathbb{N}$, $\Gamma_{n}\in \mathfrak{N}$ and $w(\Gamma_{n})=n$. Moreover, $w(\Gamma_{n})=W(\Gamma_{n})=n$ for $n=1,2$ and if $n\geq 3$, then $W(\Gamma_{n})\geq n+1$.
\end{theorem}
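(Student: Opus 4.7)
The plan is to establish all three claims by induction on $n$, leveraging the recursive decomposition $V(\Gamma_n) = A_n \cup B_n$ set up in the paper, where $A_n$ induces a copy of $\Gamma_{n-2}$, $B_n$ induces a copy of $\Gamma_{n-1}$, and every vertex $1\alpha \in A_n$ is matched to $0\alpha \in B_n$.

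For the first claim, I would first observe that $\Delta(\Gamma_n) = n$, attained by $0^n$ whose $n$ neighbors are the strings $0^{i-1}10^{n-i}$ for $1 \leq i \leq n$. Since $w(G) \geq \Delta(G)$ for every $G \in \mathfrak{N}$, this gives $w(\Gamma_n) \geq n$. To prove the matching upper bound and $\Gamma_n \in \mathfrak{N}$, I would construct by induction an interval $n$-coloring $\alpha_n$ of $\Gamma_n$, strengthening the inductive hypothesis by requiring that $S(0^n, \alpha_n) = [1,n]$. The base cases $n=1,2$ are visible in Fig. 5. For the inductive step, color $B_n$ by $\alpha_{n-1}$ (colors $[1,n-1]$) and $A_n$ by the shift $\alpha_{n-2} + 1$ (colors $[2,n-1]$). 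Each matching edge $1\alpha \sim 0\alpha$ is then assigned a color that simultaneously extends the $B_n$-spectrum at $0\alpha$ and the shifted $A_n$-spectrum at $1\alpha$ to an interval; the distinguished edge $10^{n-1} \sim 0^n$ receives color $n$, extending $[1,n-1]$ at $0^n$ to $[1,n]$ as required. Lemma~\ref{ourlemma} then certifies that $\alpha_n$ is an interval $n$-coloring.

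The second claim is immediate: $\Gamma_1 = K_2$ has a single edge, so $W(\Gamma_1) = 1$; and $\Gamma_2 = P_3$ admits only the interval coloring with two consecutive colors at its middle degree-$2$ vertex, forcing $W(\Gamma_2) = 2$.

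For the third claim, the base case $n = 3$ is an explicit construction: assigning the edges $000\text{--}010$, $000\text{--}100$, $000\text{--}001$, $100\text{--}101$, $101\text{--}001$ the colors $4, 2, 3, 1, 2$ respectively yields an interval $4$-coloring of $\Gamma_3$, so $W(\Gamma_3) \geq 4$. For $n \geq 4$, I would induct: the induction hypothesis supplies an interval $n$-coloring of $B_n \cong \Gamma_{n-1}$, which together with an appropriately shifted interval coloring of $A_n \cong \Gamma_{n-2}$ and a careful choice of matching colors — in particular placing color $n+1$ on the matching edge at $0^n \sim 10^{n-1}$ — gives an interval $(n+1)$-coloring of $\Gamma_n$. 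The main obstacle throughout is the matching-edge compatibility: the color chosen on $1\alpha \sim 0\alpha$ must simultaneously extend the $B_n$-spectrum of $0\alpha$ and the shifted $A_n$-spectrum of $1\alpha$ to intervals of integers. Resolving it requires that the inductive hypotheses encode more than plain interval-colorability, namely the precise shape of the spectra at the matched vertices (above all at $0^n$), together with a short case analysis showing that the natural matching color forced at each pair is admissible on both sides.
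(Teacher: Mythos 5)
Your outline follows the paper's strategy in broad terms — the same recursive decomposition of $\Gamma_n$ into copies of $\Gamma_{n-1}$ and $\Gamma_{n-2}$ joined by a matching, the same lower bound $w(\Gamma_n)\geq\Delta(\Gamma_n)=n$, and induction for both the $n$-coloring and the $(n+1)$-coloring — but the step you yourself flag as ``the main obstacle'' is exactly where the argument is missing, and the specific scheme you propose does not survive it. You color $B_n$ by $\alpha_{n-1}$ unshifted and $A_n$ by $\alpha_{n-2}+1$ for every $n$, with the only strengthened hypothesis being $S(0^n,\alpha_n)=[1,n]$. This already breaks at $n=4$. In $\alpha_3$ your constraints force the edge joining $000$ and $010$ to receive color $1$: the two $B_3$-edges at $000$ must carry $\{1,2\}$, and the edge joining $000$ and $001$ cannot carry $1$, since then the matching edge at $001$ would have to be $2$ while at $101$ it must be $1$ or $3$. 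Hence $S(010,\alpha_3)=\{1\}$, so in $\alpha_4$ the matching edge joining $1010$ and $0010$ must receive color $2$. On the $A_4$ side, $1010$ corresponds to the endpoint $10$ of $\Gamma_2$, and the same forcing gives $\alpha_2$ the value $1$ on the edge joining $00$ and $10$, so the shifted spectrum at $1010$ is $\{2\}$ and the matching edge there must be $1$ or $3$ — a contradiction. A single uniform shift together with an invariant at $0^n$ alone cannot be made to work; this is a missing idea, not merely a deferred verification.

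What the paper does instead is alternate the construction with the parity of $n$: for odd $n$ it shifts both sub-colorings up by one and places each matching edge at the bottom of both spectra (color $\underline S\left(1\alpha,\varphi_{n-2}\right)$), while for even $n$ it shifts only the $\Gamma_{n-2}$ part and places the matching edges at the top (color $\overline S\left(1\alpha,\varphi_{n-2}\right)+2$). The invariant that makes this succeed is carried at \emph{every} matched pair, not just at $0^n$: for odd $n$, $\underline S\left(1\alpha,\varphi_{n-2}\right)=\underline S\left(0\alpha,\varphi_{n-1}\right)$, and for even $n$, $\overline S\left(1\alpha,\varphi_{n-2}\right)+1=\overline S\left(0\alpha,\varphi_{n-1}\right)$. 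The same parity trick is reused for the $(n+1)$-colorings, which moreover need two explicit base cases, $\Gamma_3$ and $\Gamma_4$, before the recursion can start at $n=5$; your explicit interval $4$-coloring of $\Gamma_3$ is correct, but you would still need the $\Gamma_4$ base case and the parity-dependent inductive step. Until the matching colors are shown to be simultaneously admissible at both endpoints for all $n$, the construction — and with it all three parts of the theorem that rely on it — remains unproved.
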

\begin{proof}
For the proof of the first part of Theorem \ref{Fibonaccicubes}, we need to construct an interval $n$-coloring of $\Gamma_{n}$. It can be easily seen that Fig. 5 provides the interval $1$-coloring of $\Gamma_{1}$ and interval $2$-coloring of $\Gamma_{2}$. Let $\varphi_{1}$ and $\varphi_{2}$ be these colorings, respectively. 
As it was mentioned before Fibonacci cube $\Gamma_n$ can be decompose  into two subgraphs $\Gamma_{n - 1}$ and $\Gamma_{n - 2}$ in such a way that $V(\Gamma_n) = V(\Gamma_{n - 2}) \cup V(\Gamma_{n - 1})$ and $E(\Gamma_n) = E(\Gamma_{n - 2}) \cup E(\Gamma_{n - 1}) \cup M$, where $M$ is a matching between $A_{n}$ and $B_{n}$ \cite{Klavžar}. 

Let us now construct the edge-coloring $\varphi_{n}$ for $n \geq 3$ assuming that we have already constructed all $\varphi_k$ for $1 \leq k < n$. Here we consider two cases.

\textbf{Case 1:} $n$ is odd.

Let us color the edges of the matching between each vertex $1\alpha$ of $A_{n}$ and exactly one neighbor $0\alpha$ in $B_{n}$ with the color $\underline S\left(1\alpha,\varphi_{n-2}\right)$. For the remaining edges let us use the corresponding colors in the colorings $\varphi_{n - 2}$ and $\varphi_{n - 1}$, and color the edge $e$ with $\varphi_{n - 2}(e) + 1$ if $e \in E(\Gamma_{n - 2})$ and $\varphi_{n - 1}(e) + 1$ if $e \in E(\Gamma_{n - 1})$.

\textbf{Case 2:} $n$ is even.

We first use the corresponding colors in the colorings $\varphi_{n - 2}$ and $\varphi_{n - 1}$, and color the edge $e$ with $\varphi_{n - 2}(e) + 1$ if $e \in E(\Gamma_{n - 2})$ and $\varphi_{n - 1}(e) $ if $e \in E(\Gamma_{n - 1})$. Then we color the edges of the matching between each vertex $1\alpha$ of $A_{n}$ and exactly one neighbor $0\alpha$ in $B_{n}$ with the color $\overline S\left(1\alpha,\varphi_{n-2}\right)+2$. 

By the definition of the coloring $\varphi_{n}$, we have that:

\begin{description}
    \item[a)] if $n$ is odd, then $\underline S\left(1\alpha,\varphi_{n-2}\right)=\underline S\left(0\alpha,\varphi_{n-1}\right)$ for any $\alpha\in B_{n-1}$; 
    \item[b)] if $n$ is even, then $\overline S\left(1\alpha,\varphi_{n-2}\right)+1=\overline S\left(0\alpha,\varphi_{n-1}\right)$ for any $\alpha\in B_{n-1}$.
\end{description} 

This implies that $\varphi_{n}$ is an interval $n$-coloring of $\Gamma_{n}$; thus, $\Gamma_{n}\in \mathfrak{N}$ and $w(\Gamma_{n})\leq n$. On the other hand, since $w(\Gamma_{n})\geq \Delta(\Gamma_{n})=n$, we get $w(\Gamma_{n})=n$. 

\vspace{\baselineskip}

\begin{tikzpicture}

\node[draw, circle, fill=black, inner sep=1.5pt] (v1) at (8,2) {};
\node[draw, circle, fill=black, inner sep=1.5pt] (v2) at (10,2) {};
\draw (v1) -- node[above] {1} (v2);
\node[draw, circle, fill=black, inner sep=1.5pt] (v3) at (8,0) {};
\node[draw, circle, fill=black, inner sep=1.5pt] (v4) at (10,0) {};
\node[draw, circle, fill=black, inner sep=1.5pt] (v5) at (12,0) {};
\draw (v3) -- node[above] {3} (v4) -- node[above] {4} (v5);
\draw (v1) -- node[right] {2} (v3);
\draw (v2) -- node[right] {2} (v4);
\node at (10, -0.7) {$\Gamma_3$};

\end{tikzpicture}

\vspace{\baselineskip}

\begin{tikzpicture}
\node[draw, circle, fill=black, inner sep=1.5pt] (v1) at (8,4) {};
\node[draw, circle, fill=black, inner sep=1.5pt] (v2) at (10,4) {};
\draw (v1) -- node[above] {1} (v2);
\node[draw, circle, fill=black, inner sep=1.5pt] (v3) at (8,2) {};
\node[draw, circle, fill=black, inner sep=1.5pt] (v4) at (10,2) {};
\node[draw, circle, fill=black, inner sep=1.5pt] (v5) at (12,2) {};
\draw (v3) -- node[above] {3} (v4) -- node[above] {4} (v5);
\draw (v1) -- node[right] {2} (v3);
\draw (v2) -- node[right] {2} (v4);
\node[draw, circle, fill=black, inner sep=1.5pt] (v6) at (8,0) {};
\node[draw, circle, fill=black, inner sep=1.5pt] (v7) at (10,0) {};
\node[draw, circle, fill=black, inner sep=1.5pt] (v8) at (12,0) {};
\draw (v6) -- node[above] {3} (v7) -- node[above] {4} (v8);
\draw (v3) -- node[right] {4} (v6);
\draw (v4) -- node[right] {5} (v7);
\draw (v5) -- node[right] {5} (v8);
\node at (10, -0.7) {$\Gamma_4$};

\end{tikzpicture}

\begin{center}
    \textit{Fig. 6. Interval $(n+1)$-colorings of $\Gamma_n$ for $n=3,4$.}
\end{center}

\vspace{\baselineskip}

Clearly, $w(\Gamma_{n})=W(\Gamma_{n})=n$ for $n=1,2$. Let us now show that if $n\geq 3$, then $\Gamma_{n}$ has an interval $(n+1)$-coloring.
It can be easily seen that Fig. 6 provides the interval $4$-coloring of $\Gamma_{3}$ and interval $5$-coloring of $\Gamma_{4}$. Let $\phi_{3}$ and $\phi_{4}$ be these colorings, respectively. Let us now construct the edge-coloring $\phi_{n}$ for $n \geq 5$ assuming that we have already constructed all $\phi_k$ for $1 \leq k < n$. Here we again consider two cases.

\textbf{Case A:} $n$ is odd.

Let us color the edges of the matching between each vertex $1\alpha$ of $A_{n}$ and exactly one neighbor $0\alpha$ in $B_{n}$ with the color $\underline S\left(1\alpha,\phi_{n-2}\right)$. For the remaining edges let us use the corresponding colors in the colorings $\phi_{n - 2}$ and $\phi_{n - 1}$, and color the edge $e$ with $\phi_{n - 2}(e) + 1$ if $e \in E(\Gamma_{n - 2})$ and $\phi_{n - 1}(e) + 1$ if $e \in E(\Gamma_{n - 1})$.

\textbf{Case B:} $n$ is even.

We first use the corresponding colors in the colorings $\phi_{n - 2}$ and $\phi_{n - 1}$, and color the edge $e$ with $\phi_{n - 2}(e) + 1$ if $e \in E(\Gamma_{n - 2})$ and $\phi_{n - 1}(e) $ if $e \in E(\Gamma_{n - 1})$. Then we color the edges of the matching between each vertex $1\alpha$ of $A_{n}$ and exactly one neighbor $0\alpha$ in $B_{n}$ with the color $\overline S\left(1\alpha,\phi_{n-2}\right)+2$. 

By the definition of the coloring $\phi_{n}$, we have that:

\begin{description}
    \item[a')] if $n$ is odd, then $\underline S\left(1\alpha,\phi_{n-2}\right)=\underline S\left(0\alpha,\phi_{n-1}\right)$ for any $\alpha\in B_{n-1}$; 
    \item[b')] if $n$ is even, then $\overline S\left(1\alpha,\phi_{n-2}\right)+1=\overline S\left(0\alpha,\phi_{n-1}\right)$ for any $\alpha\in B_{n-1}$.
\end{description} 

This implies that $\phi_{n}$ is an interval $(n+1)$-coloring of $\Gamma_{n}$ for $n\geq 3$; thus, $W(\Gamma_{n})\geq n+1$ for $n\geq 3$.
\end{proof}

\section{Concluding remarks and open problems}

In this paper we investigated interval edge-colorings of Fibonacci cubes. In the previous paper \cite{PetrosyanKhachatrianTananyan} on interval edge-colorings of Cartesian products of graphs we obtained some results on interval colorability of various meshes. Since all trees, grids, hypercubes and Fibonacci cubes are isometric subgraphs of the hypercube, it is naturally to consider the problem of interval colorability of partial cubes in general.
So, we would like to suggest the following problem.

\begin{problem}
\label{partialcubes}
Is it true that all partial cubes are interval colorable?
\end{problem}	

In \cite{Behzad}, Behzad and Mahmoodian proved that the Cartesian product of two non-trivial graphs $G$ and $H$ is planar if and only if $G\square H=P_{m}\square P_{n}$ or $G\square H=C_{m}\square C_{n}$ or $G$ is outerplanar and $H=K_{2}$. In \cite{PetrosyanKhachatrianTananyan}, we proved that if $G\square H$ is planar and both factors have at least $3$ vertices, then $G\square H\in \mathfrak{N}$ and $w(G\square H)\leq 6$, but we strongly believe that a more general result is true:    

\begin{conjecture}
\label{planarproducts}
All planar Cartesian products of two non-trivial graphs are interval colorable.
\end{conjecture}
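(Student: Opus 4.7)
The plan is to combine the Behzad--Mahmoodian characterization of planar Cartesian products with Theorem \ref{GiaroKubale} to reduce the conjecture to a single non-trivial case, and then to construct an interval coloring by structural induction on outerplanar graphs. First I would invoke Behzad--Mahmoodian, which partitions all planar products of two non-trivial graphs into three families: grids, products of a path with a cycle, and prisms $G'\square K_{2}$ over outerplanar graphs $G'$. The grids and (bipartite) cylinders are already known to be interval colorable from \cite{GiaroKubale1}, and for the prism family the subcase $G'\in\mathfrak{N}$ is handled immediately by Theorem \ref{GiaroKubale} applied with $H=K_{2}\in\mathfrak{N}$. Thus Conjecture \ref{planarproducts} reduces to proving $G'\square K_{2}\in\mathfrak{N}$ whenever $G'$ is outerplanar and $G'\notin\mathfrak{N}$.

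For this remaining case, I would induct on $|E(G')|$, exploiting two structural facts about outerplanar graphs: every block is either a single edge or a Hamilton cycle with non-crossing chords, and every outerplanar graph contains a vertex of degree at most two. The base case is the prism over a cycle, $C_{n}\square K_{2}$: this is a $3$-regular planar graph with a perfect matching, and an explicit interval coloring with $3$ or $4$ colors can be written down directly, handling both parities of $n$. In the inductive step one would remove either a pendant vertex, a degree-two vertex, or (inside a $2$-connected block) a chord, take an interval coloring of the prism over the resulting smaller outerplanar graph by induction, and then extend it by coloring the few new prism edges (and matching edge) created by the removal, while adjusting colors locally as needed.

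The main obstacle I expect is precisely this extension step. Reinserting a vertex or a chord increases degrees at two or more vertices, and the newly colored edges must lie consecutively with the spectra already present at those vertices; in general the interval used at an endpoint need not leave room on both sides. The natural device is to reserve slack colors on both ends by shifting the coloring of a suitably chosen subgraph, propagating the shift along the block-cut tree. This propagation is exactly the mechanism by which the extra $K_{2}$-direction is expected to repair the local obstructions (such as odd-Eulerian subgraphs, cf.\ Theorem \ref{Eulerian}) that prevent $G'$ from being interval colorable on its own. Making this argument work uniformly across all outerplanar $G'$ is, I believe, the crux of Conjecture \ref{planarproducts}, and will require a careful case analysis along the chord/ear decomposition together with a monotone recoloring routine similar in spirit to the constructions of \cite{GiaroKubale1,PetrosyanKhachatrianTananyan}.
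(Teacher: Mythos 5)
The statement you are addressing is stated in the paper as Conjecture \ref{planarproducts}: the paper gives no proof of it and explicitly records it as open, so there is no ``paper's proof'' to compare against. Your opening reduction is sound and is in fact the same reduction the authors have in mind: by the Behzad--Mahmoodian characterization \cite{Behzad}, the only planar products not already settled are the prisms $G'\square K_{2}$ over outerplanar graphs $G'$ --- grids and cylinders, and more generally all planar products in which both factors have at least three vertices, are covered by \cite{GiaroKubale1,PetrosyanKhachatrianTananyan}, and the subcase $G'\in\mathfrak{N}$ follows from Theorem \ref{GiaroKubale}. So the target is exactly $G'\square K_{2}$ for outerplanar $G'\notin\mathfrak{N}$.

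The genuine gap is the inductive extension step, which you yourself flag as the crux but do not carry out; as written the proposal is a plan, not a proof. The paper makes precise what that crux amounts to: proving Conjecture \ref{planarproducts} reduces to showing that every outerplanar graph admits a \emph{near-interval} coloring (a proper edge-coloring in which each vertex's palette is an interval with at most one integer missing), since such a coloring of $G'$ can be duplicated on the two layers of the prism and the matching edge at each vertex used to plug the single missing color. This is exactly Problem \ref{outerplanar}, which is open and is known only for triangle-free outerplanar graphs \cite{Hrant}. Your proposed device --- reserving slack by shifting colors and propagating the shift along the block-cut tree --- is precisely the step no one knows how to perform uniformly: reinserting a chord or a degree-two vertex imposes simultaneous consecutivity constraints at two endpoints whose existing intervals may leave no common admissible color, and a single extra matching color per vertex does not obviously absorb the resulting conflicts without global recoloring (the obstructions of Theorem \ref{Eulerian} type are only one source of difficulty). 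Until that extension lemma, or equivalently the near-interval coloring of general outerplanar graphs, is actually proved, the conjecture remains open.
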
	

In fact, for the proof of Conjecture \ref{planarproducts}, it is necessary to show that all outerplanar graphs have near-interval colorings, where near-interval coloring is a proper edge-coloring of a graph such that the colors on the edges incident to any vertex is either an interval or a near-interval (it is an interval except for one missing integer). For triangle-free outerplanar graphs, it was shown by Khachatrian in \cite{Hrant}, but the problem remains open.

\begin{problem}
\label{outerplanar}
Is it true that all outerplanar graphs admit near-interval colorings?
\end{problem}	

In the previous paper \cite{PetrosyanKhachatrianTananyan} on interval edge-colorings of Cartesian products of graphs we proved that $W(Q_{n})=\frac{n\left( n+1\right)}{2}$ for the hypercubes $Q_{n}$. In this paper we were able to show that $W(\Gamma_{n})\geq n+1$ for $n\geq 3$ for the Fibonacci cubes $\Gamma_{n}$, and we strongly believe that this lower bound cannot be significantly improved.   

\begin{problem}
\label{Fibonacci}
What is the exact value of $W(\Gamma_{n})$ for any $n\geq 3$?
\end{problem}
\bigskip

\end{document}